\newcommand{\red}[1]{\begin{color}{red}#1\end{color}}
\newcommand{\ve}{{\bf e}}
\newcommand{\vh}{{\bf h}}
\newcommand{\supp}{\text{supp}}
\begin{document}
	\large
	
	\title{Nonnegative Biquadratic Tensors}%\footnote{This work was partially supported by Hong Kong Innovation and Technology Commission (InnoHK Project CIMDA).}}
\author{ 	Chunfeng Cui\footnote{LMIB of the Ministry of Education, School of Mathematical Sciences, Beihang University, Beijing 100191 China.
		({\tt chunfengcui@buaa.edu.cn}).}
	\and { \  Liqun Qi\footnote{Department of Applied Mathematics, The Hong Kong Polytechnic University, Hung Hom, Kowloon, Hong Kong.
			%Department of Mathematics, School of Science, Hangzhou Dianzi University, Hangzhou 310018 China
			({\tt maqilq@polyu.edu.hk}).}
	}
	%\and {and \
		%Ziyan Luo\footnote{School of Mathematics and Statistics,
			%Beijing Jiaotong University, Beijing 100044, China. ({\tt zyluo@bjtu.edu.cn}).}
		%}
}
%This author's work was supported by Beijing Natural Science Foundation (Grant No. Z190002).}
\date{\today}
\maketitle

\begin{abstract}
An M-eigenvalue of a nonnegative biquadratic tensor is   referred to as an M$^+$-eigenvalue if it has a pair of nonnegative M-eigenvectors.  If furthermore that pair of M-eigenvectors is positive, then that M$^+$-eigenvalue is called an M$^{++}$-eigenvalue.  A nonnegative biquadratic tensor has at least one M$^+$ eigenvalue,
{and} the largest M$^+$-eigenvalue is {both} the  largest M-eigenvalue and the M-spectral radius.
{For} irreducible nonnegative biquadratic {tensors}, all the  M$^+$-eigenvalues   are M$^{++}$-eigenvalues.   Although the M$^+$-eigenvalues of irreducible nonnegative biquadratic tensors are not unique in general, we establish a sufficient condition to ensure their uniqueness.  For an irreducible nonnegative biquadratic tensor, the largest M$^+$-eigenvalue has a max-min characterization, while the smallest M$^+$-eigenvalue has a min-max characterization.
A Collatz algorithm for computing the largest   M$^+$-eigenvalues is proposed.  Numerical results are reported.

\medskip

% \medskip

\textbf{Key words.} Nonnegative biquadratic tensors, irreducibility, M$^+$-eigenvalues, M$^{++}$-eigenvalues, max-min characterization, min-max characterization, {Collatz algorithm.}

\medskip
\textbf{AMS subject classifications.} 47J10, 15A18, 47H07, 15A72.
\end{abstract}

\renewcommand{\Re}{\mathds{R}}
\newcommand{\rank}{\mathrm{rank}}
\newcommand{\X}{\mathcal{X}}
\newcommand{\A}{\mathcal{A}}
\newcommand{\I}{\mathcal{I}}
\newcommand{\B}{\mathcal{B}}
\newcommand{\PP}{\mathcal{P}}
\newcommand{\C}{\mathcal{C}}
\newcommand{\D}{\mathcal{D}}
\newcommand{\LL}{\mathcal{L}}
\newcommand{\OO}{\mathcal{O}}
\newcommand{\e}{\mathbf{e}}
\newcommand{\0}{\mathbf{0}}
\newcommand{\1}{\mathbf{1}}
\newcommand{\dd}{\mathbf{d}}
\newcommand{\ii}{\mathbf{i}}
\newcommand{\jj}{\mathbf{j}}
\newcommand{\kk}{\mathbf{k}}
\newcommand{\va}{\mathbf{a}}
\newcommand{\vb}{\mathbf{b}}
\newcommand{\vc}{\mathbf{c}}
\newcommand{\vq}{\mathbf{q}}
\newcommand{\vg}{\mathbf{g}}
\newcommand{\pr}{\vec{r}}
\newcommand{\pc}{\vec{c}}
\newcommand{\ps}{\vec{s}}
\newcommand{\pt}{\vec{t}}
\newcommand{\pu}{\vec{u}}
\newcommand{\pv}{\vec{v}}
\newcommand{\pn}{\vec{n}}
\newcommand{\pp}{\vec{p}}
\newcommand{\pq}{\vec{q}}
\newcommand{\pl}{\vec{l}}
\newcommand{\vt}{\rm{vec}}
\newcommand{\x}{\mathbf{x}}
\newcommand{\vx}{\mathbf{x}}
\newcommand{\vy}{\mathbf{y}}
\newcommand{\vu}{\mathbf{u}}
\newcommand{\vv}{\mathbf{v}}
\newcommand{\y}{\mathbf{y}}
\newcommand{\vz}{\mathbf{z}}
\newcommand{\T}{\top}
\newcommand{\R}{\mathcal{R}}

\newtheorem{Thm}{Theorem}[section]
\newtheorem{Def}[Thm]{Definition}
\newtheorem{Ass}[Thm]{Assumption}
\newtheorem{Lem}[Thm]{Lemma}
\newtheorem{Prop}[Thm]{Proposition}
\newtheorem{Cor}[Thm]{Corollary}
\newtheorem{example}[Thm]{Example}
\newtheorem{remark}[Thm]{Remark}

\section{Introduction}

A real fourth order  $(m \times n \times m \times n)$-dimensional tensor $\A = (a_{i_1j_1i_2j_2}) \in \Re^{m \times n \times m \times n}$ is called a {\bf biquadratic tensor}, where $m$ and $n$ are integers with $m, n \ge 2$.   Denote $[n] := \{ 1, {\dots,} n \}$.  If for {any} $i_1, i_2 \in [m]$ and $j_1, j_2\in [n]$,
$$a_{i_1j_1i_2j_2} = a_{i_2j_2i_1j_1},$$
then  $\A$ is called a weakly symmetric tensor.  If furthermore for {any} $i_1, i_2 \in [m]$ and $j_1, j_2\in [n]$,
$$a_{i_1j_1i_2j_2} = a_{i_2j_1i_1j_2}{=a_{i_1j_2i_2j_1}},$$
then $\A$ is called a symmetric biquadratic tensor.
Let $BQ(m, n)$ denote   the set of all biquadratic tensors in $\Re^{m \times n \times m \times n}$.
{This set forms} a linear space.
Additionally, we also denote by $NBQ(m,n)$  the set of nonnegative $(m\times n\times m\times n)$-dimensional biquadratic tensors.

We say that a {biquadratic} tensor {$\A\in BQ(m,n)$} is {\bf positive semi-definite} if for any $\x \in \Re^m$ and $\y \in \Re^n$,
\begin{equation}\label{equ:PSD}
{f(\x, \y) \equiv} \langle \A, \x \circ \y \circ \x \circ \y \rangle \equiv \sum_{i_1, i_2 =1}^m \sum_{j_1, j_2 = 1}^n a_{i_1j_1i_2j_2}x_{i_1}y_{j_1}x_{i_2}y_{j_2} \ge 0,
\end{equation}
and it is {\bf positive definite} if for any $\x \in \Re^m, \x^\top \x = 1$ and $\y \in \Re^n, \y^\top \y = 1$,
$${f(\x, \y) > 0.}$$
%$${f(\x, \y) \equiv} \langle \A, \x \circ \y \circ \x \circ \y \rangle \equiv \sum_{i_1, i_2 =1}^m \sum_{j_1, j_2 = 1}^n a_{i_1j_1i_2j_2}x_{i_1}y_{j_1}x_{i_2}y_{j_2} > 0.$$

In 2009, Qi, Dai and Han \cite{QDH09} introduced M-eigenvalues and M-eigenvectors for symmetric biquadratic tensors {during their investigation of strong ellipticity condition of the {elastic} tensor in solid mechanics.}   It was proved there that a symmetric biquadratic tensor always has M-eigenvalues, and it is positive semi-definite (definite) if and only if all of its M-eigenvalues are nonnegative (positive).    Since then, more than one hundred papers on symmetric biquadratic tensors and M-eigenvalues appeared \cite{CCZ21, DLQY20, HLW20, LLL19, LCLL22, QHZX21, WSL20, WQZ09, YY12, Zh23, ZLS24}.

Very recently, Qi and Cui \cite{QC25} generalized M-eigenvalues to  general (nonsymmetric) biquadratic tensors.  This {definition} was motivated by the study of covariance tensors in statistics \cite{CHHS25}, which are not symmetric {(only weakly symmetric)}, {yet remains} positive semi-definite.  Again, a general (nonsymmetric) biquadratic tensor always has M-eigenvalues, and it is positive semi-definite (definite) if and only if all of its M-eigenvalues are nonnegative (positive).  Notably, {the definition of M-eigenvalues and M-eigenvectors for symmetric biquadratic tensors} is a special case of this new definition.

{Nonnegative tensors, along with  positive semidefinite tensors, completely
	positive tensors, and copositive tensors, constitute  four important classes of special cubic tensors \cite{QL17}.}
In this paper, we focus on nonnegative biquadratic tensors,   which  were first studied in \cite{DLQY20}.    Nonnegative biquadratic tensors arising from bipartite graphs,  polynomial theory and the study of M-biquadratic tensors, as described in \cite{QC25}.  In \cite{QC25}, M-biquadratic tensors were introduced. {It was} proved that a general biquadratic tensor always has M-eigenvalues, and it is positive semi-definite (definite) if and only if all of its M-eigenvalues are nonnegative (positive) positive semi-definite.  A key step in identifying an   M-biquadratic tensor is to calculate the largest M-eigenvalue of a nonnegative biquadratic tensor.
These theoretical developments and practical motivations have driven our interest in exploring  nonnegative biquadratic tensors and their M-eigenvalues.

In the next section, we review the definition of M-eigenvalues of general biquadratic tensors, and introduce {M$^+$-eigenvalues} and M$^{++}$-eigenvalues for nonnegative biquadratic tensors.  {Specifically,} an M-eigenvalue of a nonnegative biquadratic tensor is called an M$^+$-eigenvalue if it has a pair of nonnegative M-eigenvectors. {If the M-eigenvectors are strictly positive, the eigenvalue is further classified as}  an M$^{++}$-eigenvector.

In Section 3, we show that a nonnegative biquadratic tensor always has at least one M$^+$-eigenvalue.  While M$^+$-eigenvalues are not unique in general, we present a sufficient condition ensuring the uniqueness of M$^+$-eigenvalue and show that the largest M$^+$-eigenvalue of a nonnegative biquadratic tensor is also \red{both} the largest M-eigenvalue and M-spectral radius of that tensor.  Furthermore, we prove that all of the M$^+$-eigenvalues of an irreducible nonnegative biquadratic tensor are M$^{++}$-eigenvalues of that tensor.

We then show in Section 4 that the largest M$^+$-eigenvalue of an irreducible nonnegative biquadratic tensor has a max-min characterization, while the smallest M$^+$-eigenvalue of that irreducible nonnegative biquadratic tensor has a min-max characterization.
{These results show that the Collatz min-max characterization of irreducible nonnegative  matrices is only partially true.}

%In Section 5, we explore a possible application of nonnegative biquadratic tensors in Markov chain.

A   {Collatz-type} algorithm for computing the largest M$^+$-eigenvalue of an irreducible nonnegative biquadratic tensor is proposed in Section 5.
%An algorithm for computing the smallest M$^+$-eigenvalue of an irreducible nonnegative biquadratic tensor is also proposed there.
Numerical results are reported in Section 6 {to demonstrate the effectiveness of the proposed algorithm.
Finally, in Section 7, we provide some concluding remarks and discuss potential directions for future research.}    %Some final remarks are made in Section 7.

%\section{Application Background}

% we characterize their joint probability distribution by the matrix  $R(t)=(r_{ij}(t))\in\mathbb R_+^{m\times n}$,  where each entry represents the joint probability
%		$$r_{kl}(t) = \text{Prob}\left(X(t)=\alpha_k, Y(t)=\beta_l\right).$$
%		The temporal evolution of the joint probability distribution follows the transition rule
%		\[r_{ij}(t+1) = \sum_{kl} p_{ijkl}r_{kl}(t).\]
%		Under the steady-state condition, the joint probability distribution satisfies
%		\[\PP R=R.\]
%		

%\subsection{Bipartite Graphs}

\section{M$^+$-Eigenvalues and M$^{++}$-Eigenvalues of Nonnegative Biquadratic Tensors}

Suppose that $\A = (a_{i_1j_1i_2j_2}) \in BQ(m, n)$.   A   {real} number $\lambda$ is called an  {M-eigenvalue} of $\A$ if there are  {real} vectors  $\x = (x_1, {\dots,} x_m)^\top \in {\Re}^m, \y = (y_1, {\dots,} y_n)^\top \in {\Re}^n$ such that the following equations are satisfied:
For  \red{any} $i {\in [m]}$,
\begin{equation} \label{e5}
\sum_{i_1=1}^m \sum_{j_1, j_2=1}^n a_{i_1j_1ij_2}x_{i_1}y_{j_1}y_{j_2} +	\sum_{i_2=1}^m \sum_{j_1, j_2=1}^n a_{ij_1i_2j_2}y_{j_1}x_{i_2}y_{j_2} = 2\lambda x_i;
\end{equation}
 for  \red{any} $j {\in [n]}$,
\begin{equation} \label{e6}
\sum_{i_1,i_2=1}^m\sum_{j_1=1}^n a_{i_1j_1i_2j}x_{i_1}y_{j_1}x_{i_2} +	\sum_{i_1,i_2=1}^m\sum_{j_2=1}^n a_{i_1ji_2j_2}x_{i_1}x_{i_2}y_{j_2} = 2\lambda y_j;
\end{equation}
and
\begin{equation} \label{e7}
\x^\top \x ={ \y^\top \y} = 1.
\end{equation}
Then $\x$ and $\y$ are called the corresponding  {M-eigenvectors}.
We may   rewrite equations \eqref{e5} and \eqref{e6} as
	$$\frac12\A\cdot\vy\vx\vy+\frac12\A\vx\vy\cdot\vy=\lambda \vx \ \mathrm{ and }\ \frac12\A\vx\cdot\vx\vy+\frac12\A\vx\vy\vx\cdot=\lambda\vy,$$
	  respectively.
Furthermore, equations \eqref{e5}-\eqref{e7} are corresponding to the KKT system of the following optimization problem
\begin{equation}
	\min_{\vx\in\Re^m,\vy\in\Re^n}\ \A\vx\vy\vx\vy \ \  \text{s.t.}\ \  \x^\top \x =\y^\top \y = 1,
\end{equation}
{and $\lambda$ is the corresponding Lagrange multiplier.}

%If both $\vx$ and $\vy$ are real vectors, then $\lambda$ is also a real number.   In this case, we call $\lambda$ an M-eigenvalue, $\vx$ and $\vy$ are called their M-eigenvectors.
%M-eigenvalues were introduced

{When $\A$ is symmetric, equations \eqref{e5} and \eqref{e6} reduce to the following equations} {proposed} in 2009 by Qi, Dai and Han \cite{QDH09} {for symmetric biquadratic tensors},  i.e,
for {any} $i {\in [m]}$,
\begin{equation} \label{e5_sym}
\sum_{i_2=1}^m \sum_{j_1, j_2=1}^n a_{ij_1i_2j_2}y_{j_1}x_{i_2}y_{j_2} =  \lambda x_i;
\end{equation}
{for} \red{any} $j {\in [n]}$,
\begin{equation} \label{e6_sym}
\sum_{i_1,i_2=1}^m\sum_{j_2=1}^n a_{i_1ji_2j_2}x_{i_1}x_{i_2}y_{j_2} =  \lambda y_j.
\end{equation}
{We may also rewrite equations \eqref{e5_sym} and \eqref{e6_sym} as $\A\cdot\vy\vx\vy=\lambda \vx$ and $\A\vx\cdot\vx\vy=\lambda\vy$, respectively.}
Subsequently, several numerical methods, including the WQZ method \cite{WQZ09}, semi-definite relaxation method \cite{LNQY10,YY12}, and the shifted inverse  power method \cite{ZLS24}, were proposed {for computing M-eigenvalues of symmetric biquadratic tensors.}

The following theorem was established in \cite{QC25}.

\begin{Thm} \label{T2.1}
Suppose that $\A \in BQ(m, n)$.  Then $\A$ always {has} M-eigenvalues.  Furthermore, $\A$ is positive semi-definite if and only if all of its M-eigenvalues are nonnegative, {and} $\A$ is positive definite if and only if all of its M-eigenvalues are positive.
\end{Thm}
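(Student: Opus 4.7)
The plan is to prove existence by reducing the M-eigenvalue problem to a constrained optimization problem on a compact set, and then establish the positive (semi-)definite characterizations by showing that every M-eigenvalue coincides with the value of $f(\x,\y) = \A\vx\vy\vx\vy$ at its corresponding pair of M-eigenvectors.

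\textbf{Step 1 (Existence).} I would consider the optimization problem
\begin{equation*}
\min\bigl\{f(\x,\y) : \x \in \Re^m,\ \y \in \Re^n,\ \x^\top\x = \y^\top\y = 1\bigr\}.
\end{equation*}
The feasible set $S^{m-1}\times S^{n-1}$ is compact, and $f$ is a polynomial, hence continuous, so a minimizer $(\x^*,\y^*)$ exists. Since the constraint gradients are nonzero at any feasible point (LICQ is trivially satisfied), $(\x^*,\y^*)$ must satisfy the KKT conditions, which are precisely equations \eqref{e5}--\eqref{e7} with Lagrange multiplier $\lambda$. Thus $\A$ has at least one M-eigenvalue.

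\textbf{Step 2 (Key identity $\lambda = f(\x,\y)$).} For any M-eigenpair $(\lambda,\x,\y)$, I would multiply \eqref{e5} by $x_i$ and sum over $i \in [m]$. The left-hand side becomes a sum of two copies of $\sum_{i_1,j_1,i_2,j_2} a_{i_1j_1i_2j_2}x_{i_1}y_{j_1}x_{i_2}y_{j_2} = f(\x,\y)$ (after relabeling the dummy index $i$ as $i_2$ in the first term and $i_1$ in the second), while the right-hand side is $2\lambda\,\x^\top\x = 2\lambda$. Dividing by $2$ yields $\lambda = f(\x,\y)$. (The same identity follows from multiplying \eqref{e6} by $y_j$ and summing, giving a consistency check.)

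\textbf{Step 3 (Characterization of positive (semi-)definiteness).} If $\A$ is positive semi-definite, then $f(\x,\y) \ge 0$ on the unit-sphere product, so by the identity of Step~2 every M-eigenvalue is nonnegative; positivity of $\A$ similarly forces every M-eigenvalue to be positive. Conversely, suppose all M-eigenvalues are nonnegative. The minimum $\lambda_{\min}$ of $f$ on the compact set $S^{m-1}\times S^{n-1}$ is, by Step~1, an M-eigenvalue, hence $\lambda_{\min}\ge 0$. Thus $f(\x,\y)\ge 0$ for all unit vectors, and by bihomogeneity of $f$ in $\x$ and $\y$ this extends to all $\x\in\Re^m$ and $\y\in\Re^n$ (the cases $\x=\0$ or $\y=\0$ being trivial), so $\A$ is positive semi-definite. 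If all M-eigenvalues are strictly positive, the same argument with strict inequality shows $\A$ is positive definite.

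\textbf{Main obstacle.} None of the steps is technically deep — the only mild subtlety is verifying the index bookkeeping in Step~2, which requires care because $\A$ is not assumed symmetric: one must check that both double sums in \eqref{e5} contribute exactly $f(\x,\y)$ after relabeling. Once this identity is in hand, the rest follows by standard compactness and homogeneity arguments. Since the statement is attributed to \cite{QC25}, I would present the proof essentially as a concise self-contained verification of these three steps.
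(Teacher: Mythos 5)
Your proposal is correct, and it follows the route the paper itself indicates: the paper offers no proof of this theorem (it simply cites \cite{QC25}), but it explicitly notes that \eqref{e5}--\eqref{e7} form the KKT system of minimizing $f$ over the product of unit spheres, which is exactly your Step~1, and your Step~2 identity $\lambda = f(\x,\y)$ is the standard contraction argument (and is also what reconciles the two a priori distinct Lagrange multipliers of the two sphere constraints into the single $\lambda$ of the eigenvalue equations). No gaps.
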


By definition and {the preceding} theorem, the set of M-eigenvalues of a biquadratic tensor is {both} nonempty and compact.   {Consequently}, for {any} biquadratic tensor, there are the largest and the smallest M-eigenvalues.

Let $\A \in NBQ(m, n)$.   Suppose that $\lambda$ is an M-eigenvalue of $\A$ {associated} with a pair of nonnegative M-eigenvectors $\x \in \Re_+^m$ and $\y \in \Re_+^n$.   Then $\lambda$ is also nonnegative, i.e., $\lambda \ge 0$.    We call $\lambda$ {an} M$^+$-eigenvalue of $\A$. Furthermore, if both $\x$ and $\y$ are positive, we call $\lambda$ an M$^{++}$-eigenvalue of $\A$.
% Later, we will see that
{As we will see later,} M$^+$-eigenvalues and M$^{++}$-eigenvalues play {an important} role for nonnegative biquadratic tensors.

\section{Weak Perron-Frobenius Theorem}

{\subsection{Properties of irreducible nonnegative biquadratic tensors}}
Among the study of  nonnegative  tensors, irreducibility plays   a significant role.
Following the definitions in \cite{CQZ10, DLQY20}, we say a biquadratic tensor is irreducible if  for all $i\in [m]$ and $j\in [n]$, the   matrices $A_{\vx}^{(j,j)}={\frac12\A(:,j,:,j)+\frac12\A(:,j,:,j)^\top}\in \Re^{m\times m}$ and   $A_{\vy}^{(i,i)}={\frac12\A(i,:,i,:)+\frac12\A(i,:,i,:)^\top}\in \Re^{n\times n}$ are all irreducible.
For any $\vx\in\Re^m$ and  $\vy\in\Re^n$, denote $\supp(\vx)=\{i\in[m]: x_i\neq 0\}$ and $\supp(\vy)=\{j\in[n]: y_j\neq 0\}$, respectively.
{We also denote}  $\ve_i(m)$ and $\ve_j(n)$ as the $i$th and $j$th unit vectors in $\Re^m$ and  $\Re^n$, respectively.
An irreducible nonnegative biquadratic tensor has the following property.

\begin{Thm}\label{Thm_nonegative_supp}
Suppose that $\A = \left(a_{i_1j_1i_2j_2}\right) \in NBQ(m, n)$, where $m,n\ge 2$.  For any {nonzero vectors} $\vx\in\Re_+^m$ and $\vy\in\Re_+^n$, let $$\vu ={1 \over 2}(\A+\I)\vx\vy\cdot \vy + {1 \over 2}(\A+\I)\cdot \vy\vx\vy{\in\Re^m}$$  and $$\vv = {1 \over 2}(\A + \I) \vx\vy\vx\cdot +{1 \over 2}(\A+\I) \vx\cdot\vx\vy{\in\Re^n}.$$ Then we have
\begin{equation*}
	\supp(\vx) {\subseteq} \supp(\vu) \text{ and } \supp(\vy) {\subseteq}  \supp(\vv).
\end{equation*}
Furthermore, $\A$ is {reducible} if and only if  there is  $\vx\in\Re_+^m$ with $0< |\supp(\vx)|<m$  and $\vy\in\Re_+^n$ with $0< |\supp(\vy)| {<} n$ such that {either} $\supp(\vx) =\supp(\vu)$  or
%$\vx\in\Re_+^m$ with $0< \supp(\vx)\le m$  and $\vy\in\Re_+^n$ with $0< \supp(\vy) <n$ such that
$\supp(\vy) = \supp(\vv)$.

\end{Thm}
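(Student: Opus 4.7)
The plan is to unpack the formulas for $\vu$ and $\vv$ component-wise, handle the containment claim by a direct nonnegativity argument, and then treat the two directions of the equivalence separately, linking the tensor-level vanishing of $\vu$ or $\vv$ to matrix-level reducibility of the diagonal matrices $A_{\vx}^{(j,j)}$ and $A_{\vy}^{(i,i)}$ used in the definition of irreducibility.

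Interpreting the identity biquadratic tensor as $\I_{i_1 j_1 i_2 j_2} = \delta_{i_1 i_2}\delta_{j_1 j_2}$, I would expand
$$u_i = \tfrac12\sum_{i_1, j_1, j_2} a_{i_1 j_1 i j_2} x_{i_1} y_{j_1} y_{j_2} + \tfrac12\sum_{i_2, j_1, j_2} a_{i j_1 i_2 j_2} y_{j_1} x_{i_2} y_{j_2} + x_i \|\vy\|^2,$$
and observe that since $\A \ge 0$, $\vx, \vy \ge 0$, and $\vy \ne 0$, every summand is nonnegative and the rightmost term is strictly positive whenever $x_i > 0$. Hence $\supp(\vx) \subseteq \supp(\vu)$, and the symmetric expansion of $\vv$ yields $\supp(\vy) \subseteq \supp(\vv)$.

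For the ``if $\A$ is reducible, then such a pair exists'' direction, I would exploit the ``either/or'' form of the conclusion: reducibility of $\A$ produces either some $A_{\vx}^{(j_0,j_0)}$ or some $A_{\vy}^{(i_0,i_0)}$ that is reducible in the usual matrix sense. Suppose $A_{\vx}^{(j_0,j_0)}$ is reducible. Choosing $\vy = \ve_{j_0}(n)$ collapses the double sums in the expression above to $\vu = (A_{\vx}^{(j_0,j_0)} + I)\vx$, after renaming dummy indices and using $\sum_{j_1} y_{j_1}^2 = 1$. The classical matrix fact that a nonnegative $M$ is reducible iff there is $\vx \ge 0$ with $0 < |\supp(\vx)| < m$ and $\supp((M+I)\vx) = \supp(\vx)$ (take $\vx$ supported on a proper invariant set) then delivers the required $\vx$, and $\vy = \ve_{j_0}(n)$ satisfies $0 < |\supp(\vy)| < n$ since $n \ge 2$. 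The case of a reducible $A_{\vy}^{(i_0,i_0)}$ is handled symmetrically via $\vx = \ve_{i_0}(m)$.

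For the converse, suppose without loss of generality that $\supp(\vx) = \supp(\vu)$; set $S = \supp(\vx) \subsetneq [m]$ and $T = \supp(\vy) \subsetneq [n]$, both nonempty. For each $i \in S^c$, the equality $u_i = 0$ combined with nonnegativity of every summand forces $a_{i_1 j_1 i j_2} = 0$ and $a_{i j_1 i_2 j_2} = 0$ whenever $i_1$ (respectively $i_2$) lies in $S$ and $j_1, j_2 \in T$. Fixing any $j \in T$ and specializing $j_1 = j_2 = j$ yields $a_{i' j i j} = a_{i j i' j} = 0$ for all $i \in S^c$, $i' \in S$, so the $(S^c, S)$ block of $A_{\vx}^{(j,j)} = \tfrac12(\A(:,j,:,j) + \A(:,j,:,j)^\top)$ vanishes. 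Since $S$ is a proper nonempty subset of $[m]$, $A_{\vx}^{(j,j)}$ is reducible, and therefore $\A$ is reducible.

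The main obstacle I anticipate is purely bookkeeping: correctly identifying the identity biquadratic tensor so that the ``$+\I$'' contributes exactly the diagonal term $x_i \|\vy\|^2$, and verifying that the collapse under $\vy = \ve_{j_0}(n)$ matches the symmetrized slice $A_{\vx}^{(j_0,j_0)}$ rather than some unsymmetrized variant. Once these index-tracking issues are settled, both directions of the equivalence are driven by the classical invariant-support characterization of reducibility for nonnegative matrices.
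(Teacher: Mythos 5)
Your proof is correct and follows essentially the same route as the paper's: the containment via the strictly positive identity term $x_i\|\vy\|^2$, the "only if" direction by specializing $\vy=\ve_{j_0}(n)$ so that $\vu$ collapses to $(A_{\vx}^{(j_0,j_0)}+I)\vx$ and invoking the invariant-support characterization of matrix reducibility, and the "if" direction by extracting $a_{i'jij}+a_{iji'j}=0$ from $u_i=0$ for $j\in\supp(\vy)$. Your write-up is in fact slightly more explicit than the paper's in the converse direction, which the paper dismisses as "straightforward to verify."
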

\begin{proof}
Let $\vx\in\Re_+^m$ and $i\in\supp(\vx)$. Then $x_i>0$ and
\begin{equation*}
	u_i={1 \over 2}\sum_{i_1=1}^m \sum_{j_1, j_2=1}^n a_{i_1j_1ij_2}x_{i_1}y_{j_1}y_{j_2} + {1 \over 2}\sum_{i_2 = 1}^m\sum_{j_1, j_2 = 1}^n a_{ij_1i_2j_2} y_{j_1}x_{i_2}y_{j_2} + \sum_{j=1}^n x_iy_j^2 >0.
\end{equation*}
Thus, we have $	\supp(\vx) {\subseteq}  \supp(\vu)$. Similarly, we have $\supp(\vy) {\subseteq}  \supp(\vv)$.

Furthermore, suppose that   $\A$ is reducible.  Then either we have case {(i)}, i.e., there {is a} nonempty proper index subset $J_x{\subsetneq}  [m]$ and a  proper index {$j\in[n]$} %$J_y{\subsetneq} [n]$
such that
\begin{equation}\label{equ:x_reducible}
	{a_{i_2ji_1j}+	a_{i_1ji_2j}}=0, \ \forall i_1\in J_x, \forall i_2\notin J_x.
	%\forall j_1,j_2 \notin J_y.
\end{equation}
or we have case {(ii)}, i.e., there is a  proper index {$i\in[m]$} %subsets $J_x{\subsetneq} [m]$
and a nonempty  proper index subset  $J_y{\subsetneq}  [n]$ such that
\begin{equation}\label{equ:y_reducible}
	{a_{ij_1ij_2}+a_{ij_2ij_1}}=0, \ \forall j_1\in J_y, \forall j_2\notin J_y.
	%,\forall i_1,i_2 \notin J_x.
\end{equation}

Suppose that we have case {(i)}.
%Then there are    two nonempty proper index subsets $J_x{\subsetneq}  [m]$ and $J_y{\subsetneq} [n]$ such that  \eqref{equ:x_reducible} holds.
Let $\vx$ be defined by $x_i=0$ if $i\in J_x$ and $x_i=1$ if $i\notin J_x$, and {$\vy=\ve_j(n)$.}
%be defined by $y_j=0$ if $j\in J_y$ and $y_j=1$ if $j\notin J_y$.
Then  we may verify that {$u_i=0$ for any $i\in J_x$, which suggests that} $\supp(\vx) =\supp(\vu)$.  Similarly, if we have case {(ii)}, then we may verify that $\supp(\vy) =\supp(\vv)$.  This proves the ``only if'' part of the second conclusion of this theorem.

We now prove the {``if''} part of the second conclusion of this theorem. Suppose that there  exists   $\vx\in\Re_+^m$ with $0< |\supp(\vx)|<m$  and $\vy\in\Re_+^n$ with $0< |\supp(\vy)|\le n$  such that either $\supp(\vx) =\supp(\vu)$ or $\supp(\vy) =\supp(\vv)$. In the former case, let $J_x=[m]\setminus \supp(\vx)$ and    {$j\in \supp(\vy)$.}  It is straightforward to verify that \eqref{equ:x_reducible} holds, corresponding to case (i). Similarly, in the latter scenario, we can confirm that case (ii) applies. In either case, we conclude that   $\A$ is reducible.    This proves the {``if''}  part of the second conclusion of this theorem,
thereby concludes  the proof.
\end{proof}

Viewing the proof of Theorem \ref{Thm_nonegative_supp}, we say that $\A$ is $x$-partially  reducible if there
{is a}     nonempty proper index subset  $J_x{\subsetneq}  [m]$ and {an index $j\in[n]$}  such that  \eqref{equ:x_reducible} holds.  Otherwise, we say that $\A$ is $x$-partially  irreducible.  Similarly,  we say that $\A$ is $y$-partially  reducible if there
{is a} nonempty proper index subset    $J_y{\subsetneq} [n]$ {an index $i\in[m]$} such that   \eqref{equ:y_reducible} holds.  Otherwise, we say that $\A$ is $y$-partially  irreducible.  Then $\A$ is reducible if and only if it is either $x$-partially  reducible or $y$-partially  reducible.

\begin{Cor}\label{cor:irreduc_iff}
Suppose that $\A = \left(a_{i_1j_1i_2j_2}\right) \in NBQ(m, n)$, where $m,n\ge 2$. Let $\vu$ and $\vv$ be defined as in Theorem \ref{Thm_nonegative_supp}. Then we have
\begin{itemize}
	\item[(i)] $\A$ is $x$-partially  irreducible  if and only if for any $\vx\in\Re_+^m$ with $0< |\supp(\vx)|<m$  and $\vy\in\Re_+^n$ with $0< |\supp(\vy)|\le n$, we have
	\begin{equation*}
		\supp(\vx) {\subsetneq} \supp(\vu).
	\end{equation*}
	
	\item[(ii)] $\A$ is $y$-partially  irreducible  if and only if  for any $\vx\in\Re_+^m$ with $0<|\supp(\vx)|\le m$  and $\vy\in\Re_+^n$ with $0<|\supp(\vy)| <n$, we have
	\begin{equation*}
		\supp(\vy)  {\subsetneq} \supp(\vv).
	\end{equation*}
	
	\item[(iii)] $\A$ is  irreducible  if and only if items (i) and (ii) hold simultaneously.
\end{itemize}
\end{Cor}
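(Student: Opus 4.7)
The plan is to deduce Corollary \ref{cor:irreduc_iff} by repackaging the proof of Theorem \ref{Thm_nonegative_supp}, decoupling its two reducibility cases.

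For part (i), I would first recall from Theorem \ref{Thm_nonegative_supp} that the inclusion $\supp(\vx) \subseteq \supp(\vu)$ always holds for nonzero $\vx \in \Re_+^m$ and $\vy \in \Re_+^n$. Consequently, the strict inclusion $\supp(\vx) \subsetneq \supp(\vu)$ is equivalent to $\supp(\vx) \ne \supp(\vu)$, and proving (i) reduces to showing that $\A$ is $x$-partially reducible if and only if there exist $\vx$ with $0 < |\supp(\vx)| < m$ and $\vy$ with $0 < |\supp(\vy)| \le n$ satisfying $\supp(\vx) = \supp(\vu)$. For the $(\Leftarrow)$ direction, I would set $J_x := [m] \setminus \supp(\vx)$ (nonempty and proper by the hypothesis on $|\supp(\vx)|$) and pick any $j \in \supp(\vy)$; since $\A$, $\vx$, and $\vy$ are all nonnegative and $u_i = 0$ for every $i \in J_x$, every summand in the expression for $u_i$ must vanish, which upon specializing $j_1 = j_2 = j$ forces \eqref{equ:x_reducible} at this $j$. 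The $(\Rightarrow)$ direction is exactly the witnessing construction $x_i = 1$ for $i \notin J_x$, $x_i = 0$ otherwise, and $\vy = \ve_j(n)$, already used in the proof of Theorem \ref{Thm_nonegative_supp}.

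Part (ii) is handled by the same argument with the roles of $(\vx, \vu)$ and $(\vy, \vv)$ swapped. Part (iii) then follows immediately from the observation recorded in the paragraph preceding the corollary, that $\A$ is reducible if and only if it is either $x$-partially reducible or $y$-partially reducible; negating both sides and invoking (i) and (ii) yields the stated equivalence.

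The main obstacle, modest though it is, lies in the $(\Leftarrow)$ direction of part (i): one must confirm that $u_i = 0$ for a general admissible pair $(\vx, \vy)$ — not merely the canonical indicator pair used in the proof of Theorem \ref{Thm_nonegative_supp} — still forces the single-index reducibility relation \eqref{equ:x_reducible}. This is where the freedom to specialize $j_1 = j_2$ to a common index $j \in \supp(\vy)$ is essential: it isolates precisely the coefficients $a_{i_2 j i_1 j}$ and $a_{i_1 j i_2 j}$ that are required to vanish for $i_1 \in J_x$ and $i_2 \notin J_x$. Everything else is routine bookkeeping.
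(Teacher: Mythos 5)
Your proposal is correct and follows essentially the same route as the paper, which simply states that the corollary is the contrapositive of Theorem \ref{Thm_nonegative_supp}; you merely spell out the decoupling into the $x$- and $y$-partial cases and the specialization $j_1=j_2=j\in\supp(\vy)$ that the paper leaves implicit in the theorem's proof. No gaps.
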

\begin{proof}
This  is the contrapositive statement of  Theorem~\ref{Thm_nonegative_supp}.
\end{proof}

\begin{Cor}\label{cor:x^n-1>0}
Suppose that $\A = \left(a_{i_1j_1i_2j_2}\right) \in NBQ(m, n)$,  where $m,n\ge 2$.  Let $\vx^{(0)}\in\Re_+^m {\setminus \0_m}$ and   $\vy^{(0)}\in\Re_+^n {\setminus \0_n}$,
\begin{equation*}
	\vx^{(k)} ={1 \over 2}{(\A+\I)}\vx^{(k-1)}\vy^{(k-1)}\cdot \vy^{(k-1)} + {1 \over 2}{(\A+\I)}\cdot \vy^{(k-1)}\vx^{(k-1)}\vy^{(k-1)},
\end{equation*}
and
\begin{equation*}
	\vy^{(k)} =  {1 \over 2}{(\A+\I)} \vx^{(k-1)}\vy^{(k-1)}\vx^{(k-1)}\cdot + {1 \over 2} {(\A+\I)} \vx^{(k-1)}\cdot\vx^{(k-1)}\vy^{(k-1)},
\end{equation*}
for any $k=1,\dots,\max\{m-1,n-1\}$. If $\A$ is $x$-partially irreducible, then $\vx^{(m-1)}>\0_m$. Similarly,  if $\A$ is $y$-partially irreducible,  then  $\vy^{(n-1)}>\0_n$.
\end{Cor}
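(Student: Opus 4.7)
The plan is to argue that under the $x$-partial irreducibility assumption, the support $\supp(\vx^{(k)})$ is nondecreasing in $k$ and strictly grows at every step until it fills all of $[m]$, which forces it to be the whole set after $m-1$ iterations. The role of $y$ is analogous.

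First I would establish that the iterates $\vx^{(k)}, \vy^{(k)}$ remain nonnegative and nonzero for every $k \ge 0$. Nonnegativity is immediate from $\A \in NBQ(m,n)$ and the appearance of the identity-like term $\I$ in the update, which prevents any cancellation. To see that the iterates stay nonzero, notice that $\vx^{(k)}$ and $\vy^{(k)}$ are exactly the vectors $\vu$ and $\vv$ from Theorem \ref{Thm_nonegative_supp} evaluated at $(\vx^{(k-1)}, \vy^{(k-1)})$. Hence Theorem \ref{Thm_nonegative_supp} gives $\supp(\vx^{(k-1)}) \subseteq \supp(\vx^{(k)})$ and $\supp(\vy^{(k-1)}) \subseteq \supp(\vy^{(k)})$, so by induction both supports are nonempty whenever $\vx^{(0)}$ and $\vy^{(0)}$ are nonzero.

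Next, assuming $\A$ is $x$-partially irreducible, I would invoke Corollary \ref{cor:irreduc_iff}(i) at each step: whenever $0 < |\supp(\vx^{(k-1)})| < m$ and $0 < |\supp(\vy^{(k-1)})| \le n$, we obtain the strict inclusion
\begin{equation*}
\supp(\vx^{(k-1)}) \subsetneq \supp(\vx^{(k)}).
\end{equation*}
The previous paragraph guarantees both support conditions. Therefore $|\supp(\vx^{(k)})| \ge |\supp(\vx^{(k-1)})| + 1$ as long as the support is not yet all of $[m]$; combined with the monotonicity above, this yields by induction $|\supp(\vx^{(k)})| \ge \min\{|\supp(\vx^{(0)})| + k,\, m\} \ge \min\{k+1, m\}$. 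Taking $k = m-1$ forces $|\supp(\vx^{(m-1)})| = m$, i.e.\ $\vx^{(m-1)} > \0_m$. The symmetric argument for $\vy^{(n-1)}$ uses Corollary \ref{cor:irreduc_iff}(ii) in exactly the same way.

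The only subtle point, which I view as the main hurdle in a careful write-up, is ensuring that the support hypothesis for $\vy^{(k-1)}$ in Corollary \ref{cor:irreduc_iff}(i) is satisfied for every $k$ (and, symmetrically, the hypothesis on $\vx^{(k-1)}$ when deducing $\vy^{(n-1)} > \0_n$). This is why the initial observation that the supports of both sequences are monotone and nonempty must be established first; once that is in hand, the strict-growth argument and the counting bound are routine.
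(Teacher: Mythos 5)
Your proof is correct and follows essentially the same route as the paper: invoke Corollary \ref{cor:irreduc_iff} to get strict growth of $\supp(\vx^{(k)})$ until it equals $[m]$, then count. The paper's own proof is a terser version of exactly this argument; your added care in verifying that both supports stay nonempty (via the monotonicity from Theorem \ref{Thm_nonegative_supp}) so that the hypotheses of Corollary \ref{cor:irreduc_iff} hold at every step is a detail the paper leaves implicit.
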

\begin{proof}
By Corollary~\ref{cor:irreduc_iff}, we have $\left| \supp\left(\vx^{(k)}\right)\right|>\left| \supp\left(\vx^{(k-1)}\right)\right|$ as long as $\supp\left(\vx^{(k-1)}\right)\neq [m]$ and $\left| \supp\left(\vy^{(k)}\right)\right|>\left| \supp\left(\vy^{(k-1)}\right)\right|$ as long as $\supp\left(\vy^{(k-1)}\right)\neq [n]$. Thus, we have $|\supp\left(\vx^{(m-1)}\right)|=m$ and $|\supp\left(\vy^{(n-1)}\right)|=n$. This completes the proof.
\end{proof}

Based on Corollary~\ref{cor:x^n-1>0}, a necessary and sufficient condition for the verification of a nonnegative irreducible biquadratic tensor is presented in the following theorem.
\begin{Thm}\label{thm:verif_irredu}
Suppose that $\A = \left(a_{i_1j_1i_2j_2}\right) \in NBQ(m, n)$, where $m,n\ge 2$.
Let $\vx^{(i,0)} ={\vx^{(i,j,0)}}= \ve_i(m)$, $\vy^{(j,0)} ={\vy^{(i,j,0)}}= \ve_j(n)$,
\begin{equation*}
	{\vx^{(i,j,k)}} ={1 \over 2}(\A+\I) \vx^{(i,j,k-1)} \vy^{(j,0)}\cdot\vy^{(j,0)} + {1 \over 2}(\A+\I)  \cdot\vy^{(j,0)}\vx^{(i,j,k-1)}\vy^{(j,0)},
\end{equation*}
and
\begin{equation*}
	{\vy^{(i,j,k)}} ={1 \over 2} (\A+\I)  \vx^{(i,0)} \vy^{(i,j,k-1)}\vx^{(i,0)}\cdot + {1 \over 2}(\A+\I)  \vx^{(i,0)} \cdot\vx^{(i,0)}\vy^{(i,j,k-1)},
\end{equation*}
for any {$k=\max\{m,n\}$,} $i\in[m]$ and $j\in[n]$. Then $\A$ is $x$-partially irreducible if and only if $\vx^{(i,j,m-1)}>\0_m$ for all  $i\in[m]$ and $j\in[n]$.
Similarly, 	 $\A$ is $y$-partially irreducible if and only if $\vy^{(i,j,n-1)}>\0_n$ for all  $i\in[m]$ and $j\in[n]$.
\end{Thm}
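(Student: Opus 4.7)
The plan is to prove the $x$-partial half in detail; the $y$-partial half is symmetric under the swap $(\vx,\vy,m,n)\leftrightarrow(\vy,\vx,n,m)$. The crucial observation is that, since $\vy^{(j,0)}=\ve_j(n)$ is held fixed throughout the iteration producing $\vx^{(i,j,k)}$, expanding the $\I$-terms and using $\|\ve_j(n)\|^2=1$ collapses the recursion to the \emph{matrix} iteration
\begin{equation*}
\vx^{(i,j,k)} \;=\; \bigl(A_{\vx}^{(j,j)}+I_m\bigr)\,\vx^{(i,j,k-1)},
\end{equation*}
where $A_{\vx}^{(j,j)}=\tfrac12\A(:,j,:,j)+\tfrac12\A(:,j,:,j)^\top$ is the nonnegative symmetric slice that appears in the very definition of irreducibility. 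Both directions of the theorem will be read off this reduction.

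For the forward direction, fix $i\in[m]$, $j\in[n]$ and assume $\A$ is $x$-partially irreducible. Then $\vx^{(i,j,0)}=\ve_i(m)$ has support of size $1$, $\vy^{(j,0)}$ has nonempty support, and every iterate stays nonnegative since $(\A+\I)$ is nonnegative; so Corollary~\ref{cor:irreduc_iff}(i) applied at each step forces $\supp(\vx^{(i,j,k-1)})\subsetneq\supp(\vx^{(i,j,k)})$ whenever $|\supp(\vx^{(i,j,k-1)})|<m$. Starting from size $1$, the support must therefore reach $[m]$ by $k=m-1$, giving $\vx^{(i,j,m-1)}>\0_m$ for all $i$ and $j$.

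For the converse I argue by contrapositive. Assume $\A$ is $x$-partially reducible and pick $J_x\subsetneq[m]$ nonempty and $j^*\in[n]$ satisfying \eqref{equ:x_reducible}. Condition \eqref{equ:x_reducible} is precisely the statement that $(A_{\vx}^{(j^*,j^*)})_{i_2 i_1}=0$ for all $i_1\in J_x$ and $i_2\notin J_x$, so $J_x$ is an invariant set for $A_{\vx}^{(j^*,j^*)}+I_m$. Choose any $i^*\in J_x$; then $\supp(\vx^{(i^*,j^*,0)})=\{i^*\}\subseteq J_x$, and a one-line induction on $k$ using the matrix recursion above yields $\supp(\vx^{(i^*,j^*,k)})\subseteq J_x\subsetneq[m]$ for every $k$. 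In particular $\vx^{(i^*,j^*,m-1)}$ has a zero entry, contradicting the hypothesis.

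The one step that needs real care is the collapse to the matrix recursion itself: one must expand $(\A+\I)\vx\vy\cdot\vy$ and $(\A+\I)\cdot\vy\vx\vy$ at $\vy=\ve_j(n)$, verify that the two identity contributions combine to exactly $I_m\vx^{(i,j,k-1)}$, and recognize the symmetrization $\tfrac12(a_{i_1 j i' j}+a_{i' j i_1 j})$ that appears on $\vx^{(i,j,k-1)}$ as the entry $(A_{\vx}^{(j,j)})_{i' i_1}$. Once this identification is made, \eqref{equ:x_reducible} becomes a block-triangularity statement for a standard nonnegative matrix, both directions are immediate, and the $y$-partial assertion follows by the mirror argument using $A_{\vy}^{(i,i)}$.
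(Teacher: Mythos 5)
Your proof is correct and follows essentially the same route as the paper: necessity via the strict support-growth property (Corollary~\ref{cor:irreduc_iff}/Corollary~\ref{cor:x^n-1>0}) and sufficiency by contradiction from the reducibility condition \eqref{equ:x_reducible}, with your explicit collapse to the matrix iteration $(A_{\vx}^{(j,j)}+I_m)$ being exactly the identification the paper leaves implicit and only mentions in the remark after the theorem. The one cosmetic difference is that you start the contrapositive from $i^*\in J_x$ while the paper takes $i\in[m]\setminus J_x$; both work because the symmetrized slice $A_{\vx}^{(j,j)}$ is block-diagonal with respect to the partition $\{J_x,[m]\setminus J_x\}$, so either block is invariant.
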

\begin{proof}
The necessity part follows from Corollary~\ref{cor:x^n-1>0}. We will now demonstrate the sufficiency aspect utilizing the method of contradiction.
Suppose that $\A$ is $x$-partially reducible.   Then there are    two nonempty proper index subsets $J_x{\subsetneq}  [m]$ and $\red{j\in} [n]$ such that  \eqref{equ:x_reducible} holds. Let $i\in [m]\setminus J_x$. We may see that $x_s^{(i,j,k)}=0$ for all $s\in J_x$ and all $k$. This leads to a
contradiction with the assumption. Thus, the sufficiency is proved.
\end{proof}

Theorem~\ref{thm:verif_irredu} establishes a verification framework for assessing the irreducibility  of nonnegative biquadratic tensors by irreducibility  of  matrices associated with $A_{\vx}^{(j,j)}=\A(:,j,:,j)\in \Re^{m\times m}$ and   $A_{\vy}^{(i,i)}=\A(i,:,i,:)\in \Re^{n\times n}$ for \red{all} $i\in[m]$ and $j\in[n]$.

{\subsection{M$^+$-eigenvalues}
We now study existence and properties of M$^+$-eigenvalues of nonnegative biquadratic tensors.
}

Let $\A = \left( a_{i_1j_1i_2j_2}\right) \in NBQ(m, n)$.
{We define the associated} biquadratic polynomial as
$$f(\x, \y) := \langle \A, \x \circ \y \circ \x \circ \y \rangle \equiv \sum_{i_1, i_2 =1}^m \sum_{j_1, j_2 = 1}^n a_{i_1j_1i_2j_2}x_{i_1}y_{j_1}x_{i_2}y_{j_2}.$$
Let $\lambda_{\max}(\A)$ be the largest M-eigenvalue of $\A$.  Then we have
\begin{equation} \label{lamax}
\lambda_{\max}(\A) = \max \{ f(\x, \y) : \x^\top \x = \y^\top \y = 1, \x \in \Re^m, \y \in \Re^n \}.
\end{equation}
%$$\lambda_{\min}(\A) = \min \{ f(\x, \y) : \x^\top \x = \y^\top \y = 1, \x \in \Re^m, \y \in \Re^n \}.$$		
The M-spectral radius of $\A$,  denoted by  $\rho_M(\A)$, is defined as the largest absolute value among all M-eigenvalues of $\A$.  With this definition, we present the following theorem.

\begin{Thm}\label{Thm:rho=lmd_max}
Let $\A = \left( a_{i_1j_1i_2j_2}\right) \in NBQ(m, n)$, where $m, n \ge 2$.  Then we have
\begin{equation} \label{rholamax}
	\rho_M(\A) = \lambda_{\max}(\A),
\end{equation}
and $\lambda_{\max}(\A)$ is an M$^+$-eigenvalue of $\A$. {Consequently,   $\A$ has at least one  M$^+$-eigenvalue.}
\end{Thm}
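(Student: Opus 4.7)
The plan is to exploit the variational characterization \eqref{lamax} together with the nonnegativity of the entries of $\A$. First, I would observe that $\lambda_{\max}(\A) \ge 0$: any unit vectors $\x \in \Re_+^m$ and $\y \in \Re_+^n$ yield $f(\x,\y) \ge 0$ because every summand in $f$ is a product of nonnegative quantities, and $\lambda_{\max}$ is the maximum of $f$ over the unit spheres. This will also give $\rho_M(\A) \ge \lambda_{\max}(\A)$ directly, since $\lambda_{\max}$ is itself an M-eigenvalue and $\rho_M$ is the largest absolute value among M-eigenvalues.

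Next, I would establish the reverse inequality $\rho_M(\A) \le \lambda_{\max}(\A)$. The key identity is that if $\mu$ is an M-eigenvalue with M-eigenvectors $(\x,\y)$, then contracting \eqref{e5} with $x_i$ and summing (and symmetrically \eqref{e6} with $y_j$) yields $f(\x,\y) = \mu$. Combining this with the componentwise bound
\[
|f(\x,\y)| = \Bigl| \sum_{i_1,i_2,j_1,j_2} a_{i_1 j_1 i_2 j_2} x_{i_1} y_{j_1} x_{i_2} y_{j_2} \Bigr| \le \sum_{i_1,i_2,j_1,j_2} a_{i_1 j_1 i_2 j_2} |x_{i_1}|\, |y_{j_1}|\, |x_{i_2}|\, |y_{j_2}| = f(|\x|, |\y|),
\]
which uses nonnegativity of $\A$, and noting that $(|\x|,|\y|)$ remains on the product of unit spheres, one gets $|\mu| \le f(|\x|,|\y|) \le \lambda_{\max}(\A)$. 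Maximizing over M-eigenvalues gives $\rho_M(\A) \le \lambda_{\max}(\A)$, hence equality \eqref{rholamax}.

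For the remaining claim, I would use compactness of the product of unit spheres to pick a maximizer $(\x^*,\y^*)$ of $f$. Applying the above $|f| \le f(|\cdot|,|\cdot|)$ bound to $(\x^*,\y^*)$ and using $f(\x^*,\y^*) = \lambda_{\max}(\A) \ge 0$ forces
\[
\lambda_{\max}(\A) = f(\x^*,\y^*) = |f(\x^*,\y^*)| \le f(|\x^*|,|\y^*|) \le \lambda_{\max}(\A),
\]
so $(|\x^*|,|\y^*|) \in \Re_+^m \times \Re_+^n$ is also a maximizer. Since the KKT system for the constrained maximization of $f$ on the product of unit spheres is exactly \eqref{e5}--\eqref{e7} (with multiplier $\lambda = f(\x,\y)$, as one verifies from the same contraction identity), the nonnegative maximizer yields a pair of nonnegative M-eigenvectors with M-eigenvalue $\lambda_{\max}(\A)$. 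By the definition in Section~2, this makes $\lambda_{\max}(\A)$ an M$^+$-eigenvalue, and in particular $\A$ has at least one such eigenvalue.

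The proof contains no real obstacle: the only substantive step is the nonnegativity comparison $|f(\x,\y)| \le f(|\x|,|\y|)$, which is immediate from $a_{i_1 j_1 i_2 j_2} \ge 0$. Everything else is assembling the variational formula \eqref{lamax}, the $f(\x,\y)=\lambda$ identity for M-eigenpairs, and the observation that replacing $(\x^*,\y^*)$ by $(|\x^*|,|\y^*|)$ preserves optimality.
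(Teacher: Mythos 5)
Your proposal is correct and follows essentially the same route as the paper: the comparison $|f(\x,\y)|\le f(|\x|,|\y|)$ from nonnegativity of $\A$, the identity $f(\x,\y)=\lambda$ for M-eigenpairs, and the observation that replacing a maximizer $(\x^*,\y^*)$ by $(|\x^*|,|\y^*|)$ preserves optimality and hence yields nonnegative M-eigenvectors via the KKT system. The only difference is that you spell out the KKT step explicitly, which the paper leaves implicit.
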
	
\begin{proof}  Let $\lambda$ be an arbitrary M-eigenvalue of $\A$, with M-eigenvectors $\x$ and $\y$.  Then by (\ref{lamax}), we have
$$|\lambda| = |f(\x, \y)| \le f(|\x|, |\y|) \le \lambda_{\max}(\A).$$
This shows (\ref{rholamax}).

Now, assume that the M-eigenvalue $\lambda_{\max}(\A)$ has a pair of M-eigenvectors $\x^*$ and $\y^*$.  Then we have
$$\lambda_{\max}(\A) = |f(\x^*, \y^*)| \le f(|\x^*|, |\y^*|) \le \lambda_{\max}(\A).$$
This implies that $f(|\x^*|, {|\vy^*|}) = \lambda_{\max}(\A)$, i.e., the nonnegative vector {pair} $|\x^*|$ and $|\y^*|$   {is a pair of M$^+$-eigenvectors} of $\A$.  This proves the second conclusion and completes the proof.
\end{proof}	

{
A nonnegative biquadratic tensor always has an M$^+$ eigenvalue, yet it may   have no M$^{++}$ eigenvalue, as illustrated in the following example.
\begin{example}
	Let $\A\in NBQ(2,2)$ by defined by $a_{1111}=1$ and all other entries are zeros. Then $\A$ has two eigenvalues 1 and 0, with the corresponding eigenvectors
	\begin{eqnarray*}
		& \left\{\begin{array}{c}
			\vx = (1.0000,\   0),^\top\\
			\vy = (1.0000,\    0),^\top\\
		\end{array}\right.&
		\left\{\begin{array}{c}
			\vx = (0,\    1.0000 ),^\top\\
			\vy = (0,\    1.0000).^\top\\
		\end{array}\right.
	\end{eqnarray*}
	Consequently, $\A$ has no M$^{++}$ eigenvalue.
\end{example}

However, if the nonnegative biquadratic tensor is irreducible, then it always has an M$^{++}$ eigenvalue, as illustrated in the following theorem.
}

\begin{Thm}\label{Thm:eigpair_positive}
Suppose that $\A = \left(a_{i_1j_1i_2j_2}\right) \in NBQ(m, n)$ is irreducible, where $m,n\ge 2$.  %Assume that $\A$ has an M-eigenvalue
{Assume that} $\lambda$  is  an M$^+$ eigenvalue of $\A$  with nonnegative M-eigenvector  $\{\bar \vx, \bar \vy\}$.    Then $\lambda$ is a positive M$^{++}$-eigenvalue.
{Consequently,   $\A$ has at least one  M$^{++}$-eigenvalue.}
\end{Thm}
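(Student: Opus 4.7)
The plan is to first show that both M-eigenvectors $\bar\vx$ and $\bar\vy$ are strictly positive, and then to argue $\lambda>0$ separately. Positivity of $\bar\vx$ and $\bar\vy$ will come from recasting the M-eigenvalue equations in the language of the $(\A+\I)$-propagation already analyzed in Theorem~\ref{Thm_nonegative_supp} and Corollary~\ref{cor:irreduc_iff}; strict positivity of $\lambda$ will follow from the fact that an irreducible $\A$ with $m,n\ge 2$ must carry at least one strictly positive entry.

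The bridging identity is as follows. Interpreting $\I$ as the tensor with $\I_{i_1j_1i_2j_2}=\delta_{i_1i_2}\delta_{j_1j_2}$ (so that its contribution reproduces the $\sum_j x_i y_j^2$ term appearing in the proof of Theorem~\ref{Thm_nonegative_supp}), the normalization $\bar\vx^\top\bar\vx=\bar\vy^\top\bar\vy=1$ yields the elementary evaluations
$$\tfrac12\I\bar\vx\bar\vy\cdot\bar\vy+\tfrac12\I\cdot\bar\vy\bar\vx\bar\vy=\bar\vx, \qquad \tfrac12\I\bar\vx\bar\vy\bar\vx\cdot+\tfrac12\I\bar\vx\cdot\bar\vx\bar\vy=\bar\vy.$$
Adding these to the M-eigenvalue equations \eqref{e5}--\eqref{e6} (divided by two) and using linearity gives
$$\vu := \tfrac12(\A+\I)\bar\vx\bar\vy\cdot\bar\vy+\tfrac12(\A+\I)\cdot\bar\vy\bar\vx\bar\vy = (\lambda+1)\bar\vx,$$
and symmetrically $\vv=(\lambda+1)\bar\vy$. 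Since $\lambda$ is an M$^+$-eigenvalue we have $\lambda\ge 0$, so $\lambda+1>0$, and therefore $\supp(\vu)=\supp(\bar\vx)$ and $\supp(\vv)=\supp(\bar\vy)$.

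The positivity of the eigenvectors now follows by contradiction. Suppose $\bar\vx$ is not strictly positive; since $\bar\vx^\top\bar\vx=1$ forces $\bar\vx\ne\0_m$, this means $0<|\supp(\bar\vx)|<m$. Because $\A$ is irreducible, it is in particular $x$-partially irreducible, so Corollary~\ref{cor:irreduc_iff}(i) applied to $\bar\vx$ and $\bar\vy$ (which is nonzero by normalization) forces the strict inclusion $\supp(\bar\vx)\subsetneq\supp(\vu)$, contradicting $\supp(\vu)=\supp(\bar\vx)$. Hence $\bar\vx>\0_m$, and the symmetric argument using $\vv=(\lambda+1)\bar\vy$ together with Corollary~\ref{cor:irreduc_iff}(ii) yields $\bar\vy>\0_n$.

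Finally, to show $\lambda>0$, I would use that irreducibility of $\A$ with $m,n\ge 2$ requires each slice matrix $A_\vx^{(j,j)}$ and $A_\vy^{(i,i)}$ to be an irreducible matrix of order at least two, which in particular forces $\A$ to admit a strictly positive entry $a_{i_1j_1i_2j_2}>0$. Evaluating \eqref{e5} at $i=i_2$ with the now-positive $\bar\vx,\bar\vy$ shows that the right-hand side contains the strictly positive summand $a_{i_1j_1i_2j_2}\bar x_{i_1}\bar y_{j_1}\bar y_{j_2}$, so $2\lambda\bar x_{i_2}>0$ and hence $\lambda>0$. The concluding ``Consequently'' clause then combines this with the existence of an M$^+$-eigenvalue guaranteed by Theorem~\ref{Thm:rho=lmd_max}. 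The main obstacle I anticipate is verifying the bridging identity $\vu=(\lambda+1)\bar\vx$ cleanly; once that is in place the proof reduces to an essentially mechanical application of Corollary~\ref{cor:irreduc_iff}.
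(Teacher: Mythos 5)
Your proposal is correct, and its core mechanism --- deriving a contradiction with irreducibility from the support of a non-strictly-positive eigenvector --- is the same as the paper's. The difference is one of routing: the paper works directly with the M-eigenvalue equation, observing that $g_i = \lambda \bar x_i = 0$ for $i \notin \supp(\bar\vx)$ forces (by nonnegativity of every summand) the vanishing of the entries $a_{ij_1i_2j_2}$ with $i_2 \in \supp(\bar\vx)$, $j_1, j_2 \in \supp(\bar\vy)$, which is exactly the $x$-partial reducibility condition \eqref{equ:x_reducible}; you instead establish the bridging identity $\vu = (\lambda+1)\bar\vx$ (which is valid: the $\I$-contribution equals $\bar\vx\,\|\bar\vy\|^2 = \bar\vx$ under the normalization \eqref{e7}, and $\lambda + 1 > 0$ since M$^+$-eigenvalues are nonnegative) and then invoke Corollary~\ref{cor:irreduc_iff}(i)--(ii) to get the strict inclusion $\supp(\bar\vx) \subsetneq \supp(\vu)$, contradicting $\supp(\vu) = \supp(\bar\vx)$. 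Since Corollary~\ref{cor:irreduc_iff} is itself the contrapositive of Theorem~\ref{Thm_nonegative_supp}, whose proof is the same support computation, the two arguments are mathematically equivalent; your version is more modular in that it reuses established machinery rather than redoing the computation. One place where your write-up is actually more complete than the paper's is the step $\lambda > 0$: the paper simply asserts it after establishing $\bar\vx > \0_m$, $\bar\vy > \0_n$, whereas you correctly note that irreducibility of the slice matrices $A_{\vx}^{(j,j)}$ of order $m \ge 2$ forces some entry of $\A$ to be positive, so that the left-hand side of \eqref{e5} contains a strictly positive summand once the eigenvectors are positive, giving $2\lambda \bar x_i > 0$.
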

\begin{proof}
By the definition of M-eigenvalues and M-eigenvectors, we have
$${\A\bar\vx\bar \vy \cdot\bar \vy +}\A\cdot\bar \vy \bar \vx\bar \vy={2}\lambda \bar\vx \text{ and }\A\bar \vx\bar\vy \bar \vx\cdot +\A\bar \vx\cdot \bar \vx\bar \vy={2}\lambda \bar\vy.$$ We now show that $\bar \vx> \0_m$ and $\bar \vy > \0_n$ utilizing the method of contradiction.

Suppose that $\bar\vx \ngtr \0_m$. Let $J_x=[m]\setminus \supp(\bar\vx)$. Then $J_x$ is a nonempty set. For any $i\in J_x$, we have
\begin{eqnarray*}
	& & {\sum_{i_1=1}^m \sum_{j_1,j_2=1}^n a_{i_1j_1ij_2} \bar x_{i_1}\bar y_{j_1}\bar y_{j_2} +} \sum_{i_2=1}^m \sum_{j_1,j_2=1}^n a_{ij_1i_2j_2} \bar x_{i_2}\bar y_{j_1}\bar y_{j_2} \\
	&=& {\sum_{i_1\notin J_x} \sum_{j_1,j_2\in\supp(\bar\vy)} a_{i_1j_1ij_2} \bar x_{i_1}\bar y_{j_1}\bar y_{j_2} +}\sum_{i_2\notin J_x} \sum_{j_1,j_2\in\supp(\bar\vy)} a_{ij_1i_2j_2} \bar x_{i_2}\bar y_{j_1}\bar y_{j_2}\\
	&=& 0.
\end{eqnarray*}
This shows $a_{ij_1i_2j_2}=0$ for all  $i\in J_x$,  $i_2\notin J_x$, and $j_1,j_2\in\supp(\bar\vy)$. Thus, $\A$ is $x$-partially reducible.  This leads to a contradiction. Hence $\bar \vx> \0_m$. Similarly, we could show that $\bar \vy > \0_n$. Therefore, we have $\lambda > 0$ and is an M$^{++}$-eigenvalue.
%\red{ We now prove that $\lambda$ is   the  unique eigenvalue with nonnegative M-eigenvectors. Otherwise, there is an M-eigenvalue $\sigma>0$ and M-eigenvector $\{\vu,\vv\}$ with $\vu\in\Re_{++}^m$ and $\vv\in\Re_{++}^n$.}
\end{proof}

The {irreducibility} condition is important in Theorem~\ref{Thm:eigpair_positive}, as demonstrated in the following example.
{
\begin{example} Consider the  diagonal nonnegative biquadratic tensor $\A = \left(a_{i_1j_1i_2j_2}\right)$ $\in NBQ(m, n)$, where each entry  $a_{i_1j_1i_2j_2}$ is nonzero  only if $i_1=i_2$ and $j_1=j_2$.
	Since $\A$ is diagonal, it is reducible. For any  $i\in[m]$ and $j\in[n]$, $\lambda = a_{ijij}\in \Re_+$  forms an   M$^{+}$ eigenvalue of $\A$ with the corresponding eigenvector being $\vx =\ve_i(m)$ and $\vy = \ve_j(n)$. However, it is not an M$^{++}$ eigenvalue. 	
\end{example}
}

These two theorems form the weak Perron-Frobenius theorem of nonnegative biquadratic tensors.

\section{A Max-Min Characterization and A Min-Max Characterization}

For any $\vx\in \Re^m$ and $\vy\in \Re^n$, let
\begin{equation}\label{equ:gh}
\vg =\frac12 ( \A \cdot \vy\vx\vy + \A \vx \vy\cdot\vy), \ \ \vh=\frac12 ( \A \vx\cdot\vx\vy + \A \vx  \vy\vx\cdot).	
\end{equation}
We now introduce the concept of non-degeneracy, which encompasses irreducibility as a special case.
\begin{Def}[non-degenerate]
Let $\A\in NBQ(m,n)$. We say $\A$ is non-degenerate if
{no pair $(x_i,g_i)$ or $(y_j,h_j)$ is zero simultaneously for all $i\in[m]$ and $j\in[n]$.}
%for all $\vx \in \Re_+^m\setminus \{\0_m\}$ and $\vy \in \Re_+^n\setminus \{\0_n\}$,  $g_i$ and $x_i$ do not vanish simultaneously for all $i\in[m]$ and  $h_j$ and $y_j$ do not vanish simultaneously for all $j\in[n]$.
\end{Def}

By the proof in Theorem~\ref{Thm:eigpair_positive},
{for all $i\in[m]$ and $j\in[n]$, if $x_i=0$ and $y_j=0$,  then there is $g_i\neq 0$ and $h_j\neq 0$. Therefore,}
irreducible biquadratic tensors are  non-degenerate.

Denote
$${S_+^{m-1}}=\{\vx\in\Re_+^m: \sum x_i^2=1\}$$
as the nonnegative section of the unit sphere surface  in the $m$-dimensional space and
$$   S_{++}^{m-1}=\{\vx\in\Re_{++}^m: \sum x_i^2=1\}$$
as the interior set of ${S_+^{m-1}}$.
Let $\A\in NBQ(m,n)$ be non-degenerate. We define the following two functions for all  $\vx \in \Re_+^m\setminus \{\0_m\}$ and $\vy \in \Re_+^n\setminus \{\0_n\}$.
\begin{eqnarray}\label{equ:uv}
v(\vx,\vy) = \min_{i\in[m],j\in [n]} \left\{\frac{g_i}{x_i}, \frac{h_j}{y_j}\right\}, \ \
u(\vx,\vy) = \max_{i\in[m],j\in [n]} \left\{\frac{g_i}{x_i}, \frac{h_j}{y_j}\right\}.
\end{eqnarray}
Then $v(\vx,\vy) \le u(\vx,\vy)$, and both $u(\vx,\vy)$ and $v(\vx,\vy)$ are well-defined  over the extended reals for all  $\vx \in \Re_+^m\setminus \{\0_m\}$ and $\vy \in \Re_+^n\setminus \{\0_n\}$.
Furthermore, both $u(\vx,\vy)$ and $v(\vx,\vy)$ are continuous in $\vx\in   S_{++}^{m-1}$ and $\vy\in   S_{++}^{n-1}$. We also define
\begin{equation}\label{equ:rho_*^*}
\rho_* = \inf_{\vx\in   S_{++}^{m-1}, \vy\in   S_{++}^{n-1}} u(\vx,\vy) \text{ and } \rho^* = \sup_{\vx\in   S_{++}^{m-1}, \vy\in   S_{++}^{n-1}} v(\vx,\vy)
\end{equation}
The following lemma shows that both   $\rho_*$ and $\rho^*$  are attainable.

\begin{Lem}\label{Lem:rho_acheied}
Let $\A\in NBQ(m,n)$ be non-degenerate. Then there exist  two vectors $\vz_*=(\vx_*,\vy_*), \vz^*=(\vx^*,\vy^*)\in S_+^{m-1}\otimes S_+^{n-1}$ such that
$\rho_*=u(\vx_*,\vy_*)$ and $\rho^*=v(\vx^*,\vy^*)$.
\end{Lem}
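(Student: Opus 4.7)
The plan is to approximate each extremum by a sequence in the open domain $S_{++}^{m-1}\otimes S_{++}^{n-1}$ and then invoke compactness of its closure $S_+^{m-1}\otimes S_+^{n-1}$ to pass to the limit. The main obstacle is that the open domain is not closed, so cluster points may lie on the boundary, where some $x_i$ or $y_j$ vanishes and a ratio $g_i/x_i$ or $h_j/y_j$ becomes indeterminate. Non-degeneracy resolves this ambiguity: whenever $x_i=0$ one has $g_i>0$, and whenever $y_j=0$ one has $h_j>0$, so the natural convention $c/0=+\infty$ for $c>0$ extends $u$ and $v$ unambiguously to the compact set $S_+^{m-1}\otimes S_+^{n-1}$ with values in $\Re\cup\{+\infty\}$. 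Because $\vg$ and $\vh$ defined in \eqref{equ:gh} are continuous on the sphere, the ratio $g_i^{(k)}/x_i^{(k)}$ tends to $+\infty$ along any sequence approaching a boundary point at which $x_i$ vanishes, and similarly for $h_j/y_j$.

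For $\rho_*=\inf u$, I would take a minimizing sequence $(\vx^{(k)},\vy^{(k)})\in S_{++}^{m-1}\otimes S_{++}^{n-1}$, extract a convergent subsequence with limit $(\vx_*,\vy_*)\in S_+^{m-1}\otimes S_+^{n-1}$ by compactness, and argue by contradiction that the limit is in fact interior. If some coordinate of $\vx_*$ or $\vy_*$ were zero, the corresponding ratio would diverge to $+\infty$ along the subsequence, forcing $u(\vx^{(k)},\vy^{(k)})\to+\infty$ and contradicting the finite target $\rho_*$ (finiteness of $\rho_*$ itself follows by evaluating $u$ at any single interior point). Hence $(\vx_*,\vy_*)\in S_{++}^{m-1}\otimes S_{++}^{n-1}$, and continuity of $u$ on the interior yields $u(\vx_*,\vy_*)=\rho_*$.

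For $\rho^*=\sup v$, I would first observe that $\rho^*<\infty$: on the sphere $\vg$ and $\vh$ are bounded, while every $\vx\in S_+^{m-1}$ has some coordinate $\ge 1/\sqrt{m}$ (and likewise for $\vy\in S_+^{n-1}$), so picking such a pair of indices in the minimum furnishes a uniform upper bound on $v$. Next, take a maximizing sequence, extract a convergent subsequence $(\vx^{(k)},\vy^{(k)})\to(\vx^*,\vy^*)\in S_+^{m-1}\otimes S_+^{n-1}$, and establish upper semicontinuity of $v$ at this limit: for each index $i$ with $x_i^*>0$ and each $j$ with $y_j^*>0$ the corresponding ratio converges by continuity, while for indices at which the denominator vanishes the ratio diverges to $+\infty$ and drops out of the minimum. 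Since a minimum taken over a subset of indices dominates the minimum over the full index set,
$$\limsup_{k\to\infty}v(\vx^{(k)},\vy^{(k)})\le\min_{i:\,x_i^*>0,\;j:\,y_j^*>0}\left\{\frac{g_i(\vx^*,\vy^*)}{x_i^*},\frac{h_j(\vx^*,\vy^*)}{y_j^*}\right\}=v(\vx^*,\vy^*),$$
which combined with the trivial bound $v(\vx^*,\vy^*)\le\rho^*$ forces equality. Unlike $\vz_*$, the maximizer $\vz^*$ is allowed to lie on the boundary of $S_+^{m-1}\otimes S_+^{n-1}$, and this asymmetry between the two cases is the most delicate aspect of the argument.
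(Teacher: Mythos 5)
Your argument is correct and is essentially a sequential unpacking of the paper's own proof, which simply observes that (under non-degeneracy, with the convention $c/0=+\infty$ for $c>0$) $v$ extends continuously and $u$ extends lower semicontinuously to the compact set $S_+^{m-1}\otimes S_+^{n-1}$, so the extremum is attained; your extra observation that the minimizer of $u$ must be interior while the maximizer of $v$ may be on the boundary is consistent with this. The one point you should not call ``trivial'' is the bound $v(\vx^*,\vy^*)\le\rho^*$ when $\vz^*$ lies on the boundary: since $\rho^*$ is a supremum over the \emph{open} set only, this requires the lower-semicontinuous half of the continuity of $v$ at $\vz^*$ (approach $\vz^*$ from the interior; ratios with vanishing denominators tend to $+\infty$ and drop out of the minimum while the others converge, so $v\to v(\vx^*,\vy^*)$ and hence $v(\vx^*,\vy^*)\le\rho^*$), which is the same limit computation you already carry out for the upper bound.
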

\begin{proof}
By definition, the function $v(\vx,\vy)$ is  nonnegative and continuous in $S_+^{m-1} \otimes S_+^{n-1}$. Therefore,  there exists $\vz^*=(\vx^*,\vy^*)\in S_+^{m-1}\otimes S_+^{n-1}$ such that
$\rho^*=\sup_{\vx\in   S_{++}^{m-1}, \vy\in   S_{++}^{n-1}} v(\vx,\vy)=\max_{\vx\in S_+^{m-1}, \vy\in S_+^{n-1}} v(\vx,\vy)=v(\vx^*,\vy^*) < \infty$.

Similarly, the function $u(\vx,\vy)$ is  nonnegative and lower semi-continuous in $S_+^{m-1} \otimes S_+^{n-1}$, which is bounded below by $\rho_*$. By the compactness of $S_+^{m-1} \otimes S_+^{n-1}$, there exists $\vz_*=(\vx_*,\vy_*)\in S_+^{m-1}\otimes S_+^{n-1}$ such that
$\rho_*=\min_{\vx\in S_+^{m-1}, \vy\in S_+^{n-1}} u(\vx,\vy)$ $=u(\vx_*,\vy_*)$.
\end{proof}

By Example 2 of \cite{DLQY20}, for an irreducible nonnegative biquadratic tensor $\A$, there may exist additional M$^+$-eigenvalues, which are not equal to $\lambda_{\max}(\A)$.
\begin{example}\cite{DLQY20}\label{ex:DLQY}
Let $\B\in NBQ(2,2)$ be defined by $b_{1111} = 4$,  $b_{1212} = b_{2121} = 10$, $b_{2222}=2$, $b_{1112}=b_{1121}=b_{1211}=b_{2111}=1$, $b_{1122}=b_{1221} =b_{2112}=b_{2211}=1$, and $b_{1222}=b_{2212}=b_{2122}=b_{2221}=2$.

It is a nonnegative irreducible symmetric biquadratic tensor. By the closed form in \cite{QDH09}, we obtain its M-eigenvalues  as
\[ 10.9075,\ 10.9075,\ 10.5000,\ 5.5925,\    5.5925,\         4.8202,\    3.7408,\    1.2332,\]
and the corresponding eigenvectors as
\begin{eqnarray*}
	& \left\{\begin{array}{c}
		\vx = (0.2936,\    0.9559)^\top,\\
		\vy = (0.9442,\  0.3294)^\top,\\
	\end{array}\right.&
	\left\{\begin{array}{c}
		\vx = (0.9442,\  0.3294)^\top,\\
		\vy = (0.2936,\    0.9559)^\top,\\
	\end{array}\right. \\
	& \left\{\begin{array}{c}
		\vx = (0.7071,\    0.7071)^\top,\\
		\vy = (0.7071,\    0.7071)^\top,\\
	\end{array}\right.&
	\left\{\begin{array}{c}
		\vx = (-0.7934,\    0.6087)^\top,\\
		\vy = (0.7699,\    0.6381)^\top,\\
	\end{array}\right. \\
	& \left\{\begin{array}{c}
		\vx = (0.7699,\    0.6381)^\top,\\
		\vy = (-0.7934,\   0.6087)^\top,\\
	\end{array}\right.&
	\left\{\begin{array}{c}
		\vx = (-0.8111,\    0.5849)^\top,\\
		\vy = (-0.8111,\    0.5849)^\top,\\
	\end{array}\right. \\
	& \left\{\begin{array}{c}
		\vx = (-0.9910,\    0.1336)^\top,\\
		\vy = (-0.9910,\    0.1336)^\top,\\
	\end{array}\right.&
	\left\{\begin{array}{c}
		\vx = (-0.1908,\    0.9816)^\top,\\
		\vy = (-0.1908,\    0.9816)^\top.\\
	\end{array}\right. \\
\end{eqnarray*}
\end{example}

The biquadratic tensor $\B$ in Example~\ref{ex:DLQY} has {two distinct} M$^{++}$ eigenvalues  $10.9075$    and $10.5000$.
Therefore, $\rho_* \le 10.5000 < 10.9075 \le \rho^*$,
which shows   the  minimax   {characterization of irreducible nonnegative matrices} is not true {for irreducible nonnegative biquadratic tensors} in the traditional sense.

However, in the following, we show the minimax theorem is  still true in a  general sense: the max-min characterization holds for $\lambda_{\max}(\A)$, and the min-max characterization holds for $\lambda^{+}_{\min}(\A)$, where $\A \in NBQ(m, n)$, $\lambda_{\max}(\A)$ is the largest M$ ^{+}$-eigenvalue, i.e., the largest M-eigenvalue of $\A$, and $\lambda^{+}_{\min}(\A)$ is defined as the smallest M$^+$ eigenvalue of $\A$.

\begin{Thm}\label{thm:lmd_bd}
If $\A\in NBQ(m,n)$  is irreducible, then for any M$^+$-eigenvalue $\lambda$ of $\A$, we have
\begin{equation}\label{equ:lmd_bd}
	0 < \rho_*\le \lambda \le \rho^*.
\end{equation}
\end{Thm}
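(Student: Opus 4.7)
The plan is to exploit the fact that, under irreducibility, Theorem~\ref{Thm:eigpair_positive} promotes any M$^+$-eigenvalue $\lambda$ to an M$^{++}$-eigenvalue. So fix an M$^+$-eigenvalue $\lambda$ of $\A$ with M-eigenvectors $\bar\vx, \bar\vy$; by that theorem, $\bar\vx > \0_m$, $\bar\vy > \0_n$, $\bar\vx^\top \bar\vx = \bar\vy^\top \bar\vy = 1$, and $\lambda > 0$. Translating the defining M-eigenvalue equations \eqref{e5}--\eqref{e6} into the compact notation of \eqref{equ:gh} applied at $(\bar\vx,\bar\vy)$ yields $\vg = \lambda \bar\vx$ and $\vh = \lambda \bar\vy$ componentwise. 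Dividing each coordinate (legitimate because $\bar\vx,\bar\vy$ are strictly positive) gives
\begin{equation*}
\frac{g_i}{\bar x_i} = \lambda \quad \text{for all } i\in[m], \qquad \frac{h_j}{\bar y_j} = \lambda \quad \text{for all } j\in[n].
\end{equation*}
Hence by the definitions in \eqref{equ:uv}, $u(\bar\vx,\bar\vy) = v(\bar\vx,\bar\vy) = \lambda$.

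Since $(\bar\vx,\bar\vy) \in S_{++}^{m-1}\otimes S_{++}^{n-1}$, the definitions in \eqref{equ:rho_*^*} immediately give
\begin{equation*}
\rho_* \;=\; \inf_{\vx\in S_{++}^{m-1}, \vy\in S_{++}^{n-1}} u(\vx,\vy) \;\le\; u(\bar\vx,\bar\vy) \;=\; \lambda \;=\; v(\bar\vx,\bar\vy) \;\le\; \sup_{\vx\in S_{++}^{m-1}, \vy\in S_{++}^{n-1}} v(\vx,\vy) \;=\; \rho^*.
\end{equation*}
This supplies the two-sided estimate $\rho_* \le \lambda \le \rho^*$ without any further work.

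The last and most delicate point is the strict positivity $\rho_* > 0$. Here I would invoke Lemma~\ref{Lem:rho_acheied}, which applies since irreducibility implies non-degeneracy, to realize $\rho_* = u(\vx_*,\vy_*)$ at some $(\vx_*,\vy_*)\in S_+^{m-1}\otimes S_+^{n-1}$. If $\vx_*$ or $\vy_*$ had a zero coordinate, then by the non-degeneracy of $\A$ (no pair $(x_i,g_i)$ or $(y_j,h_j)$ vanishes simultaneously) the corresponding ratio would be $+\infty$, forcing $u(\vx_*,\vy_*) = +\infty$; this contradicts $\rho_* \le \lambda < \infty$ already established. Hence $\vx_* > \0_m$ and $\vy_* > \0_n$. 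If additionally $\rho_* = 0$, then every $g_i$ and $h_j$ would vanish at $(\vx_*,\vy_*)$, so $f(\vx_*,\vy_*) = \vx_*^\top \vg = 0$; nonnegativity of $\A$ together with strict positivity of $\vx_*,\vy_*$ then forces $a_{i_1 j_1 i_2 j_2} = 0$ for all indices, i.e., $\A = 0$, contradicting the irreducibility hypothesis. Therefore $\rho_* > 0$, completing the chain \eqref{equ:lmd_bd}. The main obstacle is really this last step — the two-sided bound is almost a one-line consequence of the eigenvalue equation, but the strict positivity of $\rho_*$ needs the combination of non-degeneracy (to keep the minimizer interior) and nonnegativity plus irreducibility (to rule out the trivial tensor).
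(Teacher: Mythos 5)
Your proof is correct and follows essentially the same route as the paper: use the positivity of the M-eigenvectors (Theorem~\ref{Thm:eigpair_positive}) to read off $u=v=\lambda$ at the eigenvector pair, deduce $\rho_*\le\lambda\le\rho^*$ from the definitions in \eqref{equ:rho_*^*}, and prove $\rho_*>0$ by contradiction via the attainment Lemma~\ref{Lem:rho_acheied}. The only (harmless) divergence is in the last step: the paper splits into cases on whether the minimizer has a zero coordinate and derives $x$-partial reducibility directly, whereas you first force the minimizer into the interior via non-degeneracy and then conclude $\A=\mathcal{O}$, which is likewise reducible.
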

\begin{proof}
{It follows \eqref{equ:rho_*^*} that $\rho_*\ge0$.} We now show $\rho_*>0$ by contradiction.
If $ \rho_*=0$, then there exist vectors $\vx\in S_+^{m-1}$ and $\vy\in S_+^{n-1}$ such that $\vg=\0_m$ and  $\vh=\0_n$.
Suppose $\vg=\0_m$.
If there exists $i$ such that $x_i=0$, we may derive
\begin{equation*}
	a_{ij_1i_2j_2} + a_{i_2j_1ij_2} =0,\ \forall i_2\in \supp(\vx), j_1,j_2\in \supp(\vy).
\end{equation*}
Consequently, $\A$ is $x$-reducible; if $\vx>\0_m$, we may derive {$\A(:,j,:,j)$} is a zero matrix for all $j\in\supp(\vy)$, which derives that $\A$ is  $x$-reducible. In both case, it contradicts with $\A\in NBQ(m,n)$  is irreducible. Therefore, $\vg\neq \0_m$. Similarly, we may show $\vh\neq \0_n$. This shows $\rho_*>0$.

For any M$^+$-eigenvalue $\lambda$ of $\A$ with eigenvector pair $\vx \ge \0_m,\vy \ge \0_n$, we may show that {$\lambda=u(\vx,\vy)\ge\rho_*$ and $\lambda=v(\vx,\vy)\le\rho^*$.} This completes the proof.
\end{proof}
The assumption that $\A\in NBQ(m,n)$  is irreducible is necessary in Theorem~\ref {thm:lmd_bd}. Otherwise, we have the following example.

\begin{example}
Let $\A\in NBQ(2,2)$ with
\[a_{1122}=a_{1221}=a_{2112}=a_{2211}=\frac12\]
and all other elements are zeros.
Then $\A$ is both $x$-reducible and $y$-reducible. Consequently, we may verify that $\lambda=0$ is an M$^+$ eigenvalue of $\A$ with eigenvector $\vx=(1,0)^\top, \vy = (1,0)^\top$.

By direct computation, we have $\vg=(x_2y_1y_2, x_1y_1y_2)^\top$ and $\vh=(x_1x_2y_2, x_1x_2y_1)^\top$.
Furthermore, we may derive that
\begin{eqnarray*}
	\rho_* &=& \min_{{\vx\in S_{++}^1, \vy\in S_{++}^1}} \max \left\{\frac{g_1}{x_1}, \frac{g_2}{x_2}, \frac{h_1}{y_1}, \frac{h_2}{y_2}\right\} \\
	&=& \min_{{\vx\in S_{++}^1, \vy\in S_{++}^1}} \frac{g_1}{x_1}\  \mathrm{ s.t. } \  \frac{g_1}{x_1} \le \min \left\{\frac{g_2}{x_2}, \frac{h_1}{y_1}, \frac{h_2}{y_2} \right\} \\
	&=& \min_{{\vx\in S_{++}^1, \vy\in S_{++}^1}}  \frac{x_2y_1y_2}{x_1}  \ \mathrm{ s.t. }  \ x_1^2 \le \min \left\{x_2^2, y_1^2, y_2^2 \right\} \\
	&=& \frac12,
\end{eqnarray*}
where the second equality follows from the symmetry of this problem.
Similarly, we may derive that $\rho^*=\frac12$.
Thus, \eqref{equ:lmd_bd} does not hold.
\end{example}

{It follows from Example~\ref{ex:DLQY} that the  M$^+$ eigenvalue of irreducible nonnegative biquadratic tensors may not be unique.}
We now derive a sufficient condition that guarantees the uniqueness of  M$^+$ eigenvalues. We begin with the following lemma.

\begin{Lem}\label{Lem:Kron_Meig}
Suppose $\A\in NBQ(m,n)$, and there exist two symmetric  matrices  $B\in\Re^{m\times m}$ and $C\in \Re^{n\times n}$ such that  $\A=B\otimes C$, i.e., the entries of $\A$ satisfy \begin{equation}\label{equ:A=B_kron_C}
	\frac14a_{ijkl}+\frac14a_{kjil}+\frac14a_{ilkj}+\frac14a_{klij}=b_{ik}c_{jl}
\end{equation}
for all $i,k\in[m]$ and $j,l\in[n]$. Then $(\lambda,\vx,\vy)$ is a nonzero M eigenpair of $\A$ if and only if $(\alpha,\vx)$ and $(\beta,\vy)$ are the nonzero   eigenpairs of $B$ and $C$ respectively, and $\lambda=\alpha\beta$.
\end{Lem}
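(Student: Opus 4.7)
The plan is to reduce the M-eigenvalue equations to ordinary matrix eigenvalue equations by exploiting the fact that the biquadratic form $f(\vx,\vy) = \sum a_{ijkl} x_i y_j x_k y_l$ is already symmetric in the pairs $(i,k)$ and $(j,l)$ when contracted against $x_i x_k y_j y_l$. Because of this, the symmetrization on the left-hand side of \eqref{equ:A=B_kron_C} does not change the value of the contracted polynomial, and I may write
$$ f(\vx,\vy) \;=\; \sum_{i,j,k,l} a_{ijkl}\, x_i y_j x_k y_l \;=\; \sum_{i,j,k,l} b_{ik} c_{jl}\, x_i y_j x_k y_l \;=\; \bigl(\vx^\top B \vx\bigr)\bigl(\vy^\top C \vy\bigr). $$
This is the decisive structural observation.

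Next I would express the M-eigenvalue system \eqref{e5}--\eqref{e6} in variational form. Both sums in \eqref{e5} together equal $\partial f/\partial x_i$, and similarly for \eqref{e6}, so the M-eigenvalue equations can be rewritten as $\nabla_{\vx} f = 2\lambda\vx$ and $\nabla_{\vy} f = 2\lambda\vy$ with $\vx^\top\vx = \vy^\top\vy = 1$. Using the product form above, these simplify to
$$ \bigl(\vy^\top C \vy\bigr)\, B\vx \;=\; \lambda\, \vx, \qquad \bigl(\vx^\top B \vx\bigr)\, C\vy \;=\; \lambda\, \vy. $$
Now the lemma essentially becomes a statement about these coupled matrix eigenvalue equations.

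For the \emph{if} direction, suppose $B\vx = \alpha\vx$ and $C\vy = \beta\vy$ with $\|\vx\| = \|\vy\| = 1$. Then $\vx^\top B\vx = \alpha$ and $\vy^\top C\vy = \beta$, and substitution immediately gives $\beta\alpha\vx = \lambda\vx$ and $\alpha\beta\vy = \lambda\vy$, so $\lambda = \alpha\beta$. For the \emph{only if} direction, assume $(\lambda,\vx,\vy)$ is a nonzero M-eigenpair. Set $\alpha := \vx^\top B\vx$ and $\beta := \vy^\top C\vy$; if either were zero, the reduced equations would force $\lambda\vx = 0$ or $\lambda\vy = 0$, contradicting $\lambda \neq 0$. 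Hence $B\vx = (\lambda/\beta)\vx$, and taking the inner product with $\vx$ yields $\alpha = \lambda/\beta$, so $\vx$ is an eigenvector of $B$ with eigenvalue $\alpha$ and $\lambda = \alpha\beta$. The analogous computation handles $C\vy$.

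The main obstacle is justifying the initial algebraic reduction cleanly: one must verify that the four-fold symmetrization appearing on the left of \eqref{equ:A=B_kron_C} does indeed collapse when contracted with $x_i y_j x_k y_l$, and that the two sums in \eqref{e5} assemble into $\partial f/\partial x_i$ in spite of $\A$ being (possibly) nonsymmetric. Both facts follow from simple index relabelings, but they are the only nontrivial bookkeeping in the argument; once done, the rest is elementary linear algebra and the nonzero hypothesis on $\lambda$ is exactly what is needed to invert $\alpha$ and $\beta$.
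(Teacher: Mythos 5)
Your proposal is correct and follows essentially the same route as the paper: both reduce the M-eigenvalue system to the coupled equations $(\vy^\top C\vy)B\vx=\lambda\vx$ and $(\vx^\top B\vx)C\vy=\lambda\vy$ and then use $\lambda\neq 0$ to conclude that the quadratic forms $\vx^\top B\vx$ and $\vy^\top C\vy$ are nonzero, so that $\vx$ and $\vy$ are genuine eigenvectors of $B$ and $C$ with $\lambda=\alpha\beta$. Your extra care in checking that the four-fold symmetrization collapses under contraction and that \eqref{e5}--\eqref{e6} are the gradient equations of $f$ is a justification the paper's proof merely asserts, not a departure from it.
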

\begin{proof}
By our assumption, we have
\[\left\{ \begin{array}{c}
	\frac12(\A \cdot \vy\vx\vy + \A \vx \vy\cdot\vy) =\left(\vy^\top C\vy\right)B\vx,\\\frac12 ( \A \vx\cdot\vx\vy + \A \vx  \vy\vx\cdot) = \left(\vx^\top B\vx\right)C\vy.
\end{array}\right.\]

On one hand, suppose that equations~\eqref{e5}-\eqref{e7} hold.
From these, we deduce the following system:
\[\left(\vy^\top C\vy\right)B\vx =\lambda \vx \text{ and } \left(\vx^\top B\vx\right)C\vy=\lambda \vy.\]
Left-multiplying the first equation by $\vx^\top$, and the second equation by $\vy^\top$, respectively, we have $\lambda = \left(\vy^\top C\vy\right)\left(\vx^\top B\vx\right)$.
If $\lambda\neq 0$, it follows that $\vy^\top C\vy\neq 0$ and $\vx^\top B\vx\neq 0$. Consequently, the above equations simplify to  $B\vx = \left(\vx^\top B\vx\right)\vx$ and $C\vy=\left(\vy^\top C\vy\right) \vy$,
which establishes the necessary part.

On the other hand, suppose that $B\vx=\alpha \vx$ and $C\vy=\beta \vy$. Then we have
\[\left\{ \begin{array}{c}
	\frac12(\A \cdot \vy\vx\vy + \A \vx \vy\cdot\vy) =\left(\vy^\top C\vy\right)B\vx =\alpha\beta\vx,\\\frac12 ( \A \vx\cdot\vx\vy + \A \vx  \vy\vx\cdot) = \left(\vx^\top B\vx\right)C\vy=\alpha\beta\vy,
\end{array}\right.\]
which establishes the sufficiency part and completes the proof.
\end{proof}

\begin{Thm}\label{Thm:Kron_Meig}
Suppose $\A\in NBQ(m,n)$ is irreducible, and there exist two  nonnegative and irreducible symmetric  matrices  $B\in\Re^{m\times m}$ and $C\in \Re^{n\times n}$ such that equation \eqref{equ:A=B_kron_C} holds.
Then the M$^+$ eigenvalue of $\A$ is unique. 	
\end{Thm}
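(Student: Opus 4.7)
The plan is to combine Lemma \ref{Lem:Kron_Meig} with the classical Perron--Frobenius theorem for irreducible nonnegative matrices, together with Theorem \ref{Thm:eigpair_positive} to rule out the zero case.

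First I would take an arbitrary M$^+$ eigenvalue $\lambda$ of $\A$ with nonnegative M-eigenvectors $\vx \in \Re_+^m$, $\vy \in \Re_+^n$. Since $\A$ is irreducible by assumption, Theorem \ref{Thm:eigpair_positive} applies and upgrades $\lambda$ to an M$^{++}$-eigenvalue: one automatically has $\lambda > 0$, $\vx > \0_m$, and $\vy > \0_n$. This is the key step that lets us invoke Lemma \ref{Lem:Kron_Meig} in its ``nonzero'' form.

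Next, because $\lambda \neq 0$, Lemma \ref{Lem:Kron_Meig} gives that $\vx$ is an eigenvector of $B$ for some eigenvalue $\alpha$ and $\vy$ is an eigenvector of $C$ for some eigenvalue $\beta$, with $\lambda = \alpha\beta$. Now $B$ and $C$ are assumed to be nonnegative, symmetric, and irreducible. By the Perron--Frobenius theorem for irreducible nonnegative matrices, the only eigenvector (up to positive scaling) of $B$ that is entrywise nonnegative is the Perron vector corresponding to the spectral radius $\rho(B)$, and similarly for $C$. Since our $\vx$ and $\vy$ are in fact strictly positive unit vectors, we conclude $\alpha = \rho(B)$ and $\beta = \rho(C)$, and hence
\[
\lambda \;=\; \rho(B)\,\rho(C).
\]
This value is independent of the choice of M$^+$ eigenpair, which establishes uniqueness.

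I expect the only delicate point in writing this out cleanly to be the elimination of $\lambda = 0$: without it, Lemma \ref{Lem:Kron_Meig} does not guarantee that $\vx,\vy$ are eigenvectors of $B,C$, and one could imagine an M$^+$ eigenvalue arising from $\vx^\top B\vx = 0$ or $\vy^\top C\vy = 0$. The invocation of Theorem \ref{Thm:eigpair_positive} closes this loophole because irreducibility of $\A$ forces any M$^+$ eigenpair to be strictly positive and hence $\lambda > 0$. Once that is in place, the argument reduces to the standard matrix Perron--Frobenius statement applied separately to $B$ and to $C$, and the product formula $\lambda = \rho(B)\rho(C)$ immediately yields uniqueness of the M$^+$ eigenvalue.
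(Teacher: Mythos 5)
Your proposal is correct and follows essentially the same route as the paper: reduce via Lemma \ref{Lem:Kron_Meig} to the classical Perron--Frobenius statement that an irreducible nonnegative matrix has (up to scaling) only one nonnegative eigenvector, namely the positive Perron vector, so $\lambda=\rho(B)\rho(C)$ is forced. Your explicit use of Theorem \ref{Thm:eigpair_positive} to rule out $\lambda=0$ before invoking the lemma is a point the paper's terser proof leaves implicit, and it is a worthwhile clarification rather than a deviation in method.
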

\begin{proof}
Under the above assumption, both $B\in\Re^{m\times m}$ and $C\in \Re^{n\times n}$ possess a unique nonnegative eigenvector, which is strictly positive. Combining this result with Lemma~\ref{Lem:Kron_Meig} derives that $\A$ has one unique M$^+$ eigenvalue.
\end{proof}

\begin{Thm}
If $\A\in NBQ(m,n)$  is irreducible, then
\[\rho_* = \lambda^{+}_{\min}(\A) \text{ and }  \rho^*=\lambda_{\max}(\A) =\rho_M(\A).\]
\end{Thm}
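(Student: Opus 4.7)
The strategy is to combine the one-sided bounds already established in Theorem~\ref{thm:lmd_bd} with a proof that both $\rho^*$ and $\rho_*$ are themselves M$^+$-eigenvalues of $\A$. From Theorem~\ref{Thm:rho=lmd_max} we already have $\rho_M(\A)=\lambda_{\max}(\A)$, and Theorem~\ref{thm:lmd_bd} supplies $\rho_*\le\lambda^+_{\min}(\A)$ and $\lambda_{\max}(\A)\le\rho^*$. So it suffices to verify the reverse inequalities $\rho^*\le\lambda_{\max}(\A)$ and $\rho_*\ge\lambda^+_{\min}(\A)$, each of which is immediate once we realize the extremal value as an M$^+$-eigenvalue of $\A$.

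For $\rho^*=\lambda_{\max}(\A)$, I would fix a maximizer $(\vx^*,\vy^*)\in S_+^{m-1}\otimes S_+^{n-1}$ with $v(\vx^*,\vy^*)=\rho^*$ granted by Lemma~\ref{Lem:rho_acheied}. By the definition of $v$ in \eqref{equ:uv}, this forces the componentwise inequalities $\vg^*\ge\rho^*\vx^*$ and $\vh^*\ge\rho^*\vy^*$, where $\vg^*,\vh^*$ are computed from $(\vx^*,\vy^*)$ via \eqref{equ:gh}. The key step is to upgrade these to equalities, so that $(\vx^*,\vy^*,\rho^*)$ satisfies the M-eigenvalue system \eqref{e5}--\eqref{e7}. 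Assuming for contradiction that some coordinate of $\vg^*-\rho^*\vx^*$ (or $\vh^*-\rho^*\vy^*$) is strictly positive, I would feed $(\vx^*,\vy^*)$ into the $(\A+\I)$-iteration of Corollary~\ref{cor:x^n-1>0}. Because that iteration is linear in $\vx$ for fixed $\vy$ with nonnegative coefficients (and symmetrically in $\vy$), it preserves the ``$\ge \rho^*$'' inequalities; and because $\A$ is irreducible, Corollary~\ref{cor:irreduc_iff} shows that the support of a nonnegative vector strictly grows under one step of the iteration until it is full, so a strict coordinate inequality propagates to every coordinate after at most $\max\{m-1,n-1\}$ steps. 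After rescaling this produces $(\tilde\vx,\tilde\vy)\in S_{++}^{m-1}\otimes S_{++}^{n-1}$ with $\tilde\vg>\rho^*\tilde\vx$ and $\tilde\vh>\rho^*\tilde\vy$ strictly. By continuity of $\vg,\vh$ on the positive orthant there is then $\varepsilon>0$ with $v(\tilde\vx,\tilde\vy)\ge\rho^*+\varepsilon$, contradicting the supremum defining $\rho^*$. Hence the inequalities are equalities, $\rho^*$ is an M$^+$-eigenvalue, and therefore $\rho^*\le\lambda_{\max}(\A)$.

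The argument for $\rho_*=\lambda^+_{\min}(\A)$ is the dual version: at the minimizer $(\vx_*,\vy_*)\in S_+^{m-1}\otimes S_+^{n-1}$ we have $\vg_*\le\rho_*\vx_*$ and $\vh_*\le\rho_*\vy_*$, and the analogous propagation of a strict coordinate inequality under the $(\A+\I)$-iteration contradicts the infimum defining $\rho_*$, again forcing an M-eigenpair and showing that $\rho_*$ is itself an M$^+$-eigenvalue. The main obstacle is the propagation step itself: it is the biquadratic analog of the $(I+A)^{n-1}$ smoothing in the classical Perron--Frobenius proof, and one must be careful that the update for $\vx$ depends on $\vy$ (and vice versa), so that monotonicity has to be tracked in both variables simultaneously. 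Corollary~\ref{cor:irreduc_iff} together with the full irreducibility of $\A$ is exactly the ingredient that lets a strict coordinate inequality spread globally under the coupled iteration, and once that propagation is in place, the continuity/compactness step closes both halves of the theorem.
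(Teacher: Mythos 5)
Your reduction is sound: with Theorem~\ref{Thm:rho=lmd_max} and Theorem~\ref{thm:lmd_bd} in hand, only $\rho^*\le\lambda_{\max}(\A)$ and $\rho_*\ge\lambda^{+}_{\min}(\A)$ remain. The gap is in the mechanism you propose for these, namely upgrading the componentwise inequalities at the extremizers to equalities by an $(\A+\I)$-smoothing contradiction. The classical $(I+A)^{n-1}$ argument rests on two facts that fail here: the Collatz--Wielandt ratios are homogeneous of degree zero, and the smoothing operator commutes with $A$. In the biquadratic setting the ratio $g_i/x_i$ is homogeneous of degree $2$ in $\vy$ and $h_j/y_j$ of degree $2$ in $\vx$, so after one coupled update the iterates must be renormalized back to $S_+^{m-1}\otimes S_{+}^{n-1}$; renormalizing $\vy'$, whose norm strictly exceeds $1$ after the update, divides every ratio $g_i/x_i$ by $\|\vy'\|^2>1$. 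The unnormalized inequalities $\vg(\vx',\vy')\ge\rho^*\vx'$ do survive the update (linearity and monotonicity in $\vx$ for fixed $\vy$, monotonicity in $\vy$), but the rescaled inequalities $\tilde{\vg}>\rho^*\tilde{\vx}$ that you assert do not follow, and there is no commutation identity to repair this because the $\vx$-update depends quadratically on $\vy$ and vice versa. This is not a technicality one can expect to engineer around: the failure of exactly this propagation is why $\rho_*$ and $\rho^*$ can be distinct for irreducible nonnegative biquadratic tensors (see the discussion after Example~\ref{ex:DLQY}, where $\rho_*\le 10.5<10.9075\le\rho^*$), whereas the matrix version of your argument would force them to coincide.

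The paper closes the remaining inequalities by a much shorter variational argument that never shows the extremizers are eigenpairs. At the maximizer $(\vx^*,\vy^*)$ of $v$ furnished by Lemma~\ref{Lem:rho_acheied}, the inequalities $\vg^*\ge\rho^*\vx^*$ and $\vh^*\ge\rho^*\vy^*$ give
$f(\vx^*,\vy^*)=(\vg^*)^\top\vx^*\ge\rho^*(\vx^*)^\top\vx^*=\rho^*$, and \eqref{lamax} then yields $\lambda_{\max}(\A)\ge\rho^*$ outright; the dual computation at the minimizer of $u$ gives $f(\vx_*,\vy_*)\le\rho_*$ and the paper asserts the other half ``similarly.'' If you want to keep your strategy of exhibiting $\rho^*$ and $\rho_*$ as M$^+$-eigenvalues, the workable route is a KKT argument at the constrained extremizers combined with irreducibility (as in the proof of Theorem~\ref{Thm:eigpair_positive}) to eliminate the multipliers on inactive coordinates, not the smoothing iteration.
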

\begin{proof}
It follows from Theorem~\ref{Thm:rho=lmd_max} that 	$\rho_M(\A) = \lambda_{\max}(\A)$.
By combining the conclusions in Theorem~\ref{thm:lmd_bd}, it remains to show $\rho_* \ge \lambda_{\min}(\A)$ and $\rho^*\le \lambda_{\max}(\A)$.

By Lemma~\ref{Lem:rho_acheied}, there exists $\vz^*=(\vx^*,\vy^*)\in S_+^{m-1}\otimes S_+^{n-1}$ such that
$\rho^*=\max_{\vx\in S_+^{m-1}, \vy\in S_+^{n-1}}v(\vx^*,\vy^*)$. Therefore, $g_i\ge \rho^* x_i^*$ and  $h_j\ge \rho^* y_j^*$ {for all $i\in[m]$ and $j\in[n]$, and} $\A\vx\vy\vx\vy = \vg^\top \vx=\vh^\top \vy\ge \rho^*$.
Thus, we have $\rho^*\le \lambda_{\max}(\A)$.

Similarly, we could show $\rho_* \ge \lambda^+_{\min}(\A)$ and completes the proof.
\end{proof}

Hence, if $\lambda^+_{\min}(\A) = \lambda_{\max}(\A)$, i.e., all the M$^+$-eigenvalues are equal, then the min-max theorem still holds in the traditional sense.   Otherwise, the max-min characterization holds for $\lambda_{\max}(\A)$, while the min-max characterization holds for $\lambda^+_{\min}(\A)$.   In this sense, not only the largest M$^+$-eigenvalue, but also the smallest M$^+$-eigenvalue, play important roles for a nonnegative biquadratic tensor.

We  have the following lemma.

\begin{Lem}
Suppose that  $\A\in NBQ(m,n)$  is irreducible and $\lambda_0$ is  an M$^+$-eigenvalue of $\A$    associated with  the  eigenvector pair $(\vx_0,\vy_0)$.  If
\begin{equation}\label{cond:unique}
	\rho^* \le \lambda_0\le \rho_*,
\end{equation}
then $\lambda_0$ is the unique M$^+$-eigenvalue of $\A$.
\end{Lem}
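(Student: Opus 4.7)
The plan is to prove this by a direct sandwich argument using Theorem~\ref{thm:lmd_bd}, rather than constructing anything new. The key observation is that the hypothesis $\rho^* \le \lambda_0 \le \rho_*$ appears to reverse the natural ordering: Theorem~\ref{thm:lmd_bd} guarantees $\rho_* \le \lambda \le \rho^*$ for \emph{every} M$^+$-eigenvalue $\lambda$, and since $\A$ is irreducible it has at least one M$^+$-eigenvalue (in fact an M$^{++}$-eigenvalue by Theorem~\ref{Thm:eigpair_positive}), so $\rho_* \le \rho^*$ necessarily holds.

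Combining $\rho_* \le \rho^*$ with the hypothesis $\rho^* \le \lambda_0 \le \rho_*$ forces the chain of inequalities to collapse, yielding $\rho_* = \lambda_0 = \rho^*$. At this point the conclusion is immediate: if $\lambda$ is any other M$^+$-eigenvalue of $\A$, Theorem~\ref{thm:lmd_bd} gives $\rho_* \le \lambda \le \rho^*$, and since these outer bounds now coincide with $\lambda_0$, we conclude $\lambda = \lambda_0$. Hence $\lambda_0$ is the unique M$^+$-eigenvalue.

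There is essentially no technical obstacle here; the argument is a one-line squeeze. The only subtle point to verify is the existence of at least one M$^+$-eigenvalue to ensure $\rho_* \le \rho^*$ makes sense, but this is already supplied by Theorems~\ref{Thm:rho=lmd_max} and \ref{Thm:eigpair_positive}. I would write the proof in two or three sentences, simply chaining $\rho_* \le \lambda_0 \le \rho^* \le \lambda_0 \le \rho_*$ to get equality everywhere, and then invoking Theorem~\ref{thm:lmd_bd} for an arbitrary M$^+$-eigenvalue.
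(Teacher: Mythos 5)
Your proposal is correct and is essentially identical to the paper's proof: both chain the hypothesis $\rho^* \le \lambda_0 \le \rho_*$ against the bound $\rho_* \le \lambda \le \rho^*$ from Theorem~\ref{thm:lmd_bd} to force $\lambda_0 = \rho_* = \rho^*$, and then conclude uniqueness by applying that theorem to an arbitrary M$^+$-eigenvalue. No gaps.
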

\begin{proof}
By combining \eqref{cond:unique} with Theorem~\ref{thm:lmd_bd}, we obtain  $\rho^* \le \lambda_0\le \rho_* \le \rho^*$. This implies $\lambda_0=\rho_*=\rho^*$, which shows that $\lambda_0$ is the unique M$^+$-eigenvalue of $\A$. This completes the proof.
\end{proof}

In Example \ref{ex:DLQY}, the nonnegative irreducible biquadratic tensor has eight M-eigenvalues.  Three of them are M$^+$-eigenvalues.  The other five M-eigenvalues are still positive, but smaller than the three M$^+$-eigenvalues.  However, M-eigenvalues of nonnegative irreducible biquadratic tensors can be negative.   In \cite{QDH09}, there is such an example.  Furthermore, for a nonnegative tensor $\A$, it is possible that there is a positive M-eigenvalue $\mu_1$, which is not an M$^+$-eigenvalue, yet
$\lambda^+_{\min}(\A) < \mu_1 < \lambda_{\max}(\A)$.  It is also possible that there is a negative M-eigenvalue $\mu_2$, satisfying $\lambda^+_{\min}(\A) < |\mu_2| < \lambda_{\max}(\A)$.  In fact, we may have such $\mu_1$ and $\mu_2$ in one example.   See
the following example.

\begin{example}\label{ex:multi_M+eig}
Let $\A\in NBQ(2,2)$ with
\[a_{1111} = 1, a_{1211}=a_{1112}=0, a_{1212}=1, a_{1122}=a_{1221}=a_{2112}=a_{2211}=2, \]
\[ a_{1121} =a_{2111}=2,a_{1222}=a_{2212}=0,a_{2121}=2, a_{2122}=a_{2221}=0, a_{2222}=1.\]
It is a nonnegative reducible symmetric biquadratic tensor. By the closed form in \cite{QDH09}, we obtain its M-eigenvalues  as
\[4.6312,\    2.3970,\    1.7917,\    1.0000,\   -0.1142,\   -1.9038\]
and the corresponding eigenvectors as
\begin{eqnarray*}
	& \left\{\begin{array}{c}
		\vx = (0.6639,\    0.7478),^\top\\
		\vy = (0.8774,\    0.4798),^\top\\
	\end{array}\right.&
	\left\{\begin{array}{c}
		\vx = (-0.6577,\    0.7533),^\top\\
		\vy = (-0.5762,\    0.8173),^\top\\
	\end{array}\right. \\
	& \left\{\begin{array}{c}
		\vx = (-0.1048,\    0.9945),^\top\\
		\vy = (-0.8848,\    0.4660),^\top\\
	\end{array}\right.&
	\left\{\begin{array}{c}
		\vx = (0,\    1.0000 ),^\top\\
		\vy = (0,\    1.0000),^\top\\
	\end{array}\right. \\
	& \left\{\begin{array}{c}
		\vx = (0.7405,\    0.6720),^\top\\
		\vy = (-0.4884,\    0.8726),^\top\\
	\end{array}\right.&
	\left\{\begin{array}{c}
		\vx = (-0.7433,\    0.6690),^\top\\
		\vy = (0.8250,\    0.5651).^\top\\
	\end{array}\right. \\
\end{eqnarray*}

Then we have the largest M-eigenvalue $\lambda_{\max}(\A) = 4.6312$ and the smallest M$^+$-eigenvalue $\lambda^+_{\min}(\A) = 1.0000$.   They are the M$^+$ eigenvalues of $\A$. However, $\A$ still has {four} other M-eigenvalues, which are not M$^+$-eigenvalues.   Among them, $2.3970$ and $1.7917$ are both greater than $\lambda^+_{\min}(\A) = 1.0000$, and a negative M-eigenvalue {$-1.9038$,} whose absolute value is still
greater than $\lambda^+_{\min}(\A) = 1.0000$.
\end{example}

{\section{A  Collatz Method}}

In this section, we generalize the Collatz method for calculating the spectral radius of irreducible nonnegative matrices \cite{Gv96} and tensors \cite{NQZ09}  to biquadratic tensors.

The algorithm is summarized in Algorithm~\ref{Alg:Collatz_NBQ}.

\begin{algorithm}[t]
\caption{A Collatz method for computing M$^+$-eigenvalues of irreducible nonnegative biquadratic tensors} \label{Alg:Collatz_NBQ}
\begin{algorithmic} [1]
	\Require $\A\in NBQ(m,n)$, $\vx^{(0)}\in \Re_{+}^m$, $\vy^{(0)}\in \Re_{+}^n$, $k_{\max}$,  $\epsilon$. Let $\underline{\lambda}^{(-1)}=-1$ and $\bar{\lambda}^{(-1)}=\infty$.
	\For{$k=0,\dots,k_{\max},$}
	\State Compute  $\vg^{(k)} =\frac12 ( \A \cdot \vy^{(k)}\vx^{(k)}\vy^{(k)} + \A \vx^{(k)} \vy^{(k)}\cdot\vy^{(k)})$ and $\vh^{(k)}=\frac12 ( \A \vx^{(k)}\cdot\vx^{(k)}\vy^{(k)} + \A \vx^{(k)}  \vy^{(k)}\vx^{(k)}\cdot).$	
	\State Let  $\underline{\lambda}^{(k)} = \min_{x_i^{(k)}>0, y_j^{(k)}>0} \left\{\frac{g_i^{(k)}}{x_i^{(k)}}, \frac{h_j^{(k)}}{y_j^{(k)}}\right\}$ and $\bar{\lambda}^{(k)} = \max_{x_i^{(k)}>0, y_j^{(k)}>0} \left\{\frac{g_i^{(k)}}{x_i^{(k)}}, \frac{h_j^{(k)}}{y_j^{(k)}}\right\}$.
	\If{$\min\{ |\underline{\lambda}^{(k)}-\bar{\lambda}^{(k)}|, |\underline{\lambda}^{(k)}-\underline{\lambda}^{(k-1)}|+ |\bar{\lambda}^{(k)}-\bar{\lambda}^{(k-1)}| \}\le \epsilon $,}
	\State Stop.
	\EndIf
	\State Let $\vx^{(k+1)}=\frac{\vg^{(k)}}{\|\vg^{(k)}\|}$ and $\vy^{(k+1)}=\frac{\vh^{(k)}}{\|\vh^{(k)}\|}$.
	\EndFor
	\State \textbf{Output:} $\vx^{(k)}$ and $\vy^{(k)}$
\end{algorithmic}
\end{algorithm}

\section{Numerical Results}
To demonstrate the efficiency of Algorithm~\ref{Alg:Collatz_NBQ}, we conduct a series of numerical experiments.
We generate   $\vx\in \Re^m$, $\vy\in \Re^n$ randomly by the  uniform  distribution and then normalize them to get the initial points by $\vx^{(0)} = \frac{\vx}{\|\vx\|}$ and  $\vy^{(0)} = \frac{\vy}{\|\vy\|}$, respectively.
The parameters in  Algorithm~\ref{Alg:Collatz_NBQ} are set as follows:    $k_{\max}=1000$,  $\epsilon = 10^{-8}$.
To ensure robustness, each experiment is repeated  100 times with different initial points, and the average results are reported.
In the following tables, `Iter' denotes the average iteration to reach the stopping criterion, `Time (s)' reports the average CPU time consumed in seconds, `$\underline{\lambda}$' and `$\bar{\lambda}$' denote the values of   $\underline{\lambda}^{(k)}$  and $\bar{\lambda}^{(k)}$ at the final iterations, respectively,  $\lambda =\frac12 (\underline{\lambda}+\bar{\lambda})$,
`Res' denotes the  residue computed by
$$Res =\left\|\begin{bmatrix}
\frac12 \A\cdot \vy\vx\vy+\frac12 \A\vx \vy\cdot\vy-\lambda\vx \\
\frac12 \A\vx \cdot\vx\vy+\frac12 \A\vx \vy\vx\cdot-\lambda\vx \\
\end{bmatrix}\right\|_{\infty},$$
where $\vx$ and $\vy$ denote the values of   $\vx^{(k)}$  and $\vy^{(k)}$ at the final iterations, respectively.
We also use `ratio$_{\underline{\lambda}\approx \rho_M}$` and `ratio$_{\bar{\lambda}\approx\rho_M}$' to denote the ratio of $|\underline{\lambda}-\rho_M|\le 10^{-6}$ and $|\bar{\lambda}-\rho_M|\le 10^{-6}$  over 100 repeats, respectively.

First, we examine two small-scale cases in Examples~\ref{ex:DLQY} and \ref{ex:multi_M+eig}, which serve as preliminary illustrations of the algorithm's performance. The numerical results presented in Table~\ref{tab:ex6.1} demonstrate that Algorithm~\ref{Alg:Collatz_NBQ} is capable of accurately computing the largest  M$^+$ eigenvalue, although there are multiple M$^+$ eigenvalues for those two examples.

\begin{table}
\begin{center}
	\caption{Numerical results of Algorithm~\ref{Alg:Collatz_NBQ} for two small examples.}\label{tab:ex6.1}
	\begin{tabular}{c|c|c|c|c|c|c|c}
		\hline
		& $\rho_M$ & Iter & Time (s) & $\bar{\lambda}-\underline{\lambda}$ & Res &  ratio$_{\underline{\lambda}\approx \rho_M}$ &  ratio$_{\bar{\lambda}\approx\rho_M}$   \\ \hline
		Example~\ref{ex:DLQY} & 10.9075 & 60.18 & 4.65e-02 & 2.30e-08 & 1.10e-08 & 100\% & 100\% \\
		Example~\ref{ex:multi_M+eig} &  4.6312 & 24.22 & 1.88e-02 & 7.89e-09 & 2.95e-09 & 100\% & 100\% \\
		\hline
	\end{tabular}
\end{center}
\end{table}

We further validate the algorithm through randomized experiments. Specifically, we construct symmetric nonnegative biquadratic tensors by sampling from a uniform distribution and apply Algorithm~\ref{Alg:Collatz_NBQ} to compute the largest  M$^+$ eigenvalues.
The numerical results, as shown in Table~\ref{tab:ex6.2}, indicate that Algorithm~\ref{Alg:Collatz_NBQ} achieves high precision in computing the largest  M$^+$, typically converging within approximately 10 iterations and requiring less than one second of computation time. On average, both the eigenvalue gap
$\bar{\lambda}-\underline{\lambda}$  and  the residue in eigenpair are maintained below  $10^{-9}$.
Furthermore, we denote  $\rho_M$ by the largest M$^+$ eigenvalues from 100 repeated experiments. The results demonstrate that both the lower and upper bounds consistently converge to the same M$^+$
eigenvalue, confirming the reliability and stability of the proposed method.

\begin{table}
\begin{center}
	\caption{Numerical results of Algorithm~\ref{Alg:Collatz_NBQ} for randomly generated biquadratic tensors.}\label{tab:ex6.2}
	\setlength{\tabcolsep}{10pt}
	\begin{tabular}{c|c|c|c|c|c|c|c}
		\hline
		$m$ & $n$ & Iter & Time (s) & $\bar{\lambda}-\underline{\lambda}$ & Res &  ratio$_{\underline{\lambda}\approx \rho_M}$ &  ratio$_{\bar{\lambda}\approx\rho_M}$  \\ \hline
		\multirow{3}*{10}
		& 10  & 10.41  & 8.86e$-$03 & 5.03e$-$10  & 8.10e$-$11 & 100\%  & 100\% \\	
		& 50  & 9.00  & 1.64e$-$02 & 4.33e$-$11  & 4.77e$-$12  & 100\%  & 100\% \\
		& 100  & 8.06  & 2.48e$-$02 & 2.21e$-$10  & 1.45e$-$11 & 100\%  & 100\% \\ \hline
		\multirow{3}*{30}
		& 10  & 9.05  & 1.24e$-$02 & 2.67e$-$10  & 3.55e$-$11 & 100\%  & 100\% \\
		& 50  & 8.00  & 5.60e$-$02 & 8.48e$-$12  & 7.09e$-$13 & 100\%  & 100\%  \\
		& 100  & 7.80  & 1.83e$-$01 & 2.37e$-$11  & 1.84e$-$12 & 100\%  & 100\%  \\  \hline
		\multirow{3}*{50}
		& 10  & 9.00  & 1.82e$-$02 & 6.13e$-$11  & 5.49e$-$12 & 100\%  & 100\%  \\
		& 50  & 7.89  & 1.26e$-$01 & 1.53e$-$11  & 1.09e$-$12 & 100\%  & 100\%  \\
		& 100  & 7.00  & 4.24e$-$01 & 1.77e$-$11  & 1.17e$-$12 & 100\%  & 100\%  \\
		\hline
	\end{tabular}
\end{center}
\end{table}

\section{Final Remarks}

The {M-eigenvalue problem of} nonnegative biquadratic {tensors} is somewhat similar with the Z-eigenvalue problem of nonnegative cubic tensors \cite{CPZ13}, while the  singular value problem of  nonnegative biquadratic tensors \cite{CQZ10} is  similar with the H-eigenvalue problem of nonnegative cubic tensors \cite{CPZ08, CPZ11, FGH13, QL17}.
In this paper, we introduced M$^+$-eigenvalues and M$^{++}$-eigenvalues for nonnegative biquadratic tensors, and studied their properties.  In contrast to its singular value counterpart,
the largest M$^+$-eigenvalue of a nonnegative biquadratic tensor is not unique. {We also established a sufficient condition ensuring the uniqueness of the M$^+$-eigenvalue.}
An important further research problem is %under which conditions
{to explore more conditions for the uniqueness of}
the M$^+$-eigenvalue.  The other further research problems are as follows.  If the nonnegative biquadratic tensor is primitive \cite{CPZ11, QL17} or even positive, can we obtain some better results? If
the nonnegative biquadratic tensor is only weakly irreducible \cite{FGH13}, can we still obtain the results?
{Although extensive numerical experiments   suggest the convergence of the Collatz algorithm, we do not yet have a rigorous proof. Thus, we propose its convergence as an open problem for further investigation.}

\bigskip	

%\bigskip

{{\bf Acknowledgment}}
This work was partially supported by Research  Center for Intelligent Operations Research, The Hong Kong Polytechnic University (4-ZZT8),    the National Natural Science Foundation of China (Nos. 12471282 and 12131004), the R\&D project of Pazhou Lab (Huangpu) (Grant no. 2023K0603), and the Fundamental Research Funds for the Central Universities (Grant No. YWF-22-T-204).

%Lie algebra, {and Professor Yuanhua Ni and his students for discussion on kinematics control}.   In particular, we are grateful to Professor Chengming Bai and his Ph.D. student Yuanchang Lin, whose note enabled the Lie algebra identification in Section 3.

%and Zhongming Chen for the discussion on standard dual quaternion optimization, to Wei Li for the discussion on hand-eye calibration, to Jiantong Cheng for the discussion on SLAM, to Guyan Ni for introducing Jiantong Cheng to me, and to Chen Ouyang and Jinjie Liu for Figures 1 and 2.   I would like to thank two anonymous referees who carefully read my manuscript and gave very helpful comments.

{{\bf Data availability} Data will be made available on reasonable request.

{\bf Conflict of interest} The authors declare no conflict of interest.}

%\section*{Compliance with ethical standards}
%\bigskip

%{\bf Conflicts of Interest} The author declares no conflict of interest.

% \vspace{100pt}


\begin{thebibliography}{10}

%\bibitem{ADGP13} P. Alonso, J. Delgado, R. Gallego, and J.M. Pen\~{a}, ``Conditioning and accurate computations with Pascal matrices'', {\sl J. Comput. Appl. Math. \bf 252} (2013) 21¡§C26.


%\bibitem{BLQZ12} I.M. Bomze, C. Ling, L. Qi and X. Zhang, ``Standard bi-quadratic optimization problems 	and unconstrained polynomial reformulations'', {\sl Journal of Global Optimization, \bf 52} (2012) 663-687.

%\bibitem{BP92} R. Brawer and M. Pirovino, ``The linear algebra of the pascal matrix'', {\sl Linear Algebra and its Applications  \bf 174}  (1992) 13-23.


\bibitem{CPZ08} K.C. Chang, K. Pearson and T. Zhang, ``Perron-Frobenius theorem for nonnegative tensors'',  {\sl  Communications in Mathematical Sciences \bf 6} (2008)  507-520.

\bibitem{CPZ11} K.C. Chang, K.J. Pearson and T. Zhang, ``Primitivity, the convergence of the NQZ method, and the largest eigenvalue of nonnegative tensors'', {\sl SIAM Journal on Matrix Analysis and Applications \bf 32} (2011) 806-819.


\bibitem{CPZ13} K.C. Chang, K.J. Pearson and T. Zhang, ``Some variational principle for Z-eigenvalues of nonnegative tensors'', {\sl Linear Algebra and Its Applications \bf 438} (2013) 4166-4182.

\bibitem{CQZ10}
K.C. Chang, L. Qi and G. Zhou, ``Singular values of a real rectangular tensor'', {\sl Journal of Mathematical Analysis and
	Applications \bf 370} (2010) 284–294.


\bibitem{CCZ21} H. Che,  H. Chen and G.  Zhou, ``New M-eigenvalue intervals and application to the strong ellipticity of fourth-order partially symmetric tensors'', {\sl  Journal of Industrial and Management Optimization \bf 17(6)} (2021) 3685-3694.


%\bibitem{CvB24} L. Chen, E.R. van Dam and C. Bu, ``Spectra of power hypergraphs and signed graphs via parity-closed walks'', {\sl Journal of Combinatorial Theory, Series A \bf 207} (2024) No. 105909.


\bibitem{CHHS25} Y. Chen, Z. Hu,  J. Hu and L. Shu, ``Block structure-based covariance tensor decomposition for group 	identification in matrix variables'', {\sl Statistics and Probability Letters \bf 216} (2025) 110251.


%\bibitem{CD12} J. Cooper and A. Dutle, ``Spectra of uniform hypergraphs'', {\sl Linear Algebra and Its Applications \bf 436} (2012) 3268-3292.


%\bibitem{CLO98} D. Cox, J. Little and D. O'Shea, {\sl Using Algebraic Geometry} Springer-Verlag, New York, 1998.

%\bibitem{CQC24} C. Cui, L. Qi and Y. Chen, ``The determinants of Pascal tensors'', November 2024, arXiv:2411.09491v2.

%\bibitem{DD01} C. D'Andrea and  A. Dickenstein, ``Explicit formulas for the multivariate resultant'', {\sl J. Pure Appl. Algebra \bf 164} (2001) 59-86.

\bibitem{DLQY20} W. Ding, J. Liu, L. Qi and H. Yan, ``Elasticity M-tensors and the strong ellipticity condition'', {\sl Applied Mathematics and Applications \bf 373} (2020) No. 124982.


%\bibitem{DQW13} W. Ding, L. Qi and Y. Wei, ``{$\mathcal M$}-tensors and nonsingular  {$\mathcal M$}-tensors'', {\sl Linear Alegebra and Its Applications \bf 439} (2013) 3264-3278.

%\bibitem{DW16} W. Ding and Y. Wei, ``Solving multi-linear systems with {$\mathcal M$}-tensors'', {\sl Journal of Scientific Computing \bf 68} {(2016)} 689-715.

\bibitem{FGH13} S. Friedland, S. Gaubert and L. Han, ``Perron-Frobenius theorem for nonnegative multilinear forms and extensions'', {\sl Linear Algebra and Its Applications \bf 438} (2013) 738-749.

%	\red{\bibitem{GKZ94} I.M. Gelfand, M.M. Kapranov and A.V. Zelevinsky, {\sl Discriminants, Resultants and Multidimensional
		%	Determinants} Birkh\"auser, Boston, 1994.}

{\bibitem{Gv96} G.H. Golub and C.F. van Loan, {\sl Matrix Computations}, Johns Hopkins University Press, third edition, 1996.}


\bibitem{HLW20} J. He, C. Li and Y. Wei, `M-eigenvalue intervals and checkable sufficient conditions for the strong ellipticity'', {\sl Applied Mathematics Letters \bf 102} (2020) 106137.

%\bibitem{Hi90} N.J. Higham, ``Analysis of the Cholesky decomposition of a semi-definite matrix", In: M.G. Cox and S.J. Hammarling, eds., {\sl Reliable Numerical Computation},  Oxford University Press, Oxford, UK (1990)   161-185.

%\bibitem{Hi88} D. Hilbert, ``\"{U}ber die Darstellung definiter Formen als Summe von Formenquadraten'',  {{\sl  Mathematische Annalen  \bf 32} (1888)} 342-350.

%\bibitem{HJ13}		R.A. Horn and C.R. Johnson, {\sl Matrix Analysis}, Cambridge University 		Press, New York, second edition, 2013.


%	{\bibitem{HHLQ13} S. Hu, Z. Huang, C. Ling and L. Qi, ``On determinants and eigenvalue theory of tensors'', {\sl Journal of Symbolic Computation \bf 50} (2013) 508-531.}


%\bibitem{HLW21} Z.H. Huang, X. Li, and Y.  Wang, ``Bi-block positive semidefiniteness of bi-block symmetric tensors'',  {\sl   Frontiers of Mathematics in China \bf 16}  (2021) 141–169.


%\bibitem{JYZ17} B. Jiang, F. Yang and S. Zhang, ``Tensor and its tucker core: The invariance relationships'', {\sl
	%Numerical Linear Algebra with Applications \bf 24 (3)}  (2017) e2086.


%\bibitem{KS76}  {J.K. Knowles and}  E. Sternberg,  ``On the failure of ellipticity of the equations for finite elastostatic plane strain'', {\sl  Archive for Rational Mechanics and Analysis \bf 63}  (1976) 321-336.

%\bibitem{KB09} T.G. Kolda and  B.W. Bader,  ``Tensor decomposition and applications'', {\sl  SIAM Review \bf 51}  (2009) 455-500.
%\bibitem{Le10} B. Lewis, ``Revisiting the Pascal matrices'', {\sl The American Mathematical Monthly \bf 117} (2010) 50-66.

%	\bibitem{LSF24} H. Li, L. Su and S. Fallat, ``On a relationship between the characteristic and matching polynomials of a uniform hypertree'', {\sl Discrete Mathematics \bf 347} (2024) No. 113915.



\bibitem{LLL19} S. Li, C. Li and Y. Li, ``M-eigenvalue inclusion intervals for a fourth-order partially symmetric tensor'', {\sl Journal of Computational and Applied Mathematics \bf 356} (2019) 391-401.

\bibitem{LCLL22} S. Li, Z. Chen, Q. Liu and L. Lu,
``Bounds of M-eigenvalues and strong ellipticity conditions for elasticity tensors'', {\sl Linear and Multilinear Algebra \bf 70} (2022)  4544-4557.


%	\bibitem{LCC17} W. Li, J. Cooper and A. Chang, ``Analytic connectivity of k-uniform hypergraphs'', {\sl Linear and Multilinear Algebra \bf 65} (2017) 1247-1259.



\bibitem{LNQY10}  C. Ling, J. Nie, L. Qi and Y. Ye, ``Biquadratic optimization over unit spheres and semidefinite programming relaxations'', {\sl SIAM Journal on Optimization \bf 20} (2010) 1286-1310.



%\bibitem{LQ16} Z. Luo and L. Qi, ``Completely positive tensors: Properties, easily checkable subclasses and tractable relaxations'', {\sl SIAM Journal on Matrix Analysis and Applications \bf 37} (2016) 1675-1698.

{
	\bibitem{NQZ09}  M. Ng, L. Qi and G. Zhou, ``Finding the largest eigenvalue of a nonnegative tensor'', {\sl SIAM Journal on Matrix Analysis and Applications \bf 31 (3)} (2009) 1090-1099.
}


%\bibitem{Ni14} V. Nikiforov, ``Analytic methods for uniform hypergraphs'', {\sl Linear Algebra and Its Applications \bf 457} (2014) 455-535.

%\bibitem{Ny85} J.F. Nye, {\sl Physical Properties of Crystals: Their Representation by Tensors and
	%Matrices}, Clarendon Press, {Oxford}, second edition, 1985.
% Osford

%\bibitem{PDV05} J.M. Papy, L. De Lathauer and S. Van Huffel, ``Exponential data fitting using multilinear algebra: The single-channel and multi-{channel} case'', {\sl Numerical Linear Algebra and Applications \bf 12} (2005) 809-826.



%\bibitem{RSRS18} S.M. O'Rourke, P. Setlur, M. Rangaswamy and A. L. Swindlehurst, ``Relaxed biquadratic optimization for joint filter-signal design in signal-dependent STAP'', {\sl IEEE Transactions on Signal Processing, \bf 66} (2018) 1300-1315.



%\bibitem{Qi05} L. Qi, ``Eigenvalues of a real supersymmetric tensor'', {\sl Journal of Symbolic Computation \bf 40} (2005) 1302-1324.

\bibitem{QC25} L. Qi and C. Cui, ``Biquadratic tensors: Eigenvalues and structured tensors'', March 2025, arXiv:2502.06447v2.

\bibitem{QDH09} L. Qi, H.H. Dai and D. Han, ``Conditions for strong
ellipticity and M-eigenvalues'', {\sl Frontiers of Mathematics in
	China \bf 4} (2009) 349-364.


\bibitem{QHZX21} L. Qi, S. Hu, X. Zhang and Y. Xu, ``Biquadratic tensors, biquadratic decomposition and norms of biquadratic tensors'', {\sl Frontiers of Mathematics in China \bf 16} (2021) 171-185.

\bibitem{QL17} L. Qi and Z. Luo, {\sl Tensor Analysis: Spectral Theory and Special Tensors} SIAM, Philadelphia, 2017.

%\bibitem{QS14} L. Qi and Y. Song, ``An even order symmetric B tensor is positive definite'', {\sl Linear Algebra and its Applications \bf 457} (2014) 303-312.




%\bibitem{Ri06}	W. Rindler, Relativity: Special, General and Cosmological.   Oxford
%University Press, Oxford, {second edition,} 2006.




\bibitem{WSL20} G. Wang, L. Sun  and L. Liu, ``M-eigenvalues-based sufficient conditions for the positive 	definiteness of fourth-order partially symmetric tensors'',
{\sl Complexity}  (2020) No. 2474278.



\bibitem{WQZ09} Y. Wang, L. Qi and X. Zhang, ``A practical method for computing the largest M-eigenvalue of a fourth-order partially symmetric tensor'', {\sl  Numerical Linear Algebra with Applications, 16}  (2009) 589–601.

%\bibitem{YYLDZY16} J. Yan, X. Yin, W. Lin, C. Deng, H. Zha and X. Yang, ``A short survey of recent advances in graph matching'', {\sl Proceedings of the 2016 ACM on International Conference on Multimedia Retrieval} (2016) 167-174.


\bibitem{YY12} Y. Yang and Q. Yang, ``On solving biquadratic optimization via semidefinite
relaxation'', {\sl Computational Optimization and Applications \bf 53} (2012) 845-867.


%\bibitem{YSS16} X. Yuan, J. Shao and H. Shan, ``Ordering of some uniform supertrees with larger spectral radii'', {\sl Linear Algebra and Its Applications \bf 495} (2016) 206-222.

%	\bibitem{ZQZ14} L. Zhang, L. Qi and G. Zhou, ``M-tensors and some applications'', {\sl SIAM Journal on Matrix Analysis and Applications \bf 35} (2014) 437-452.

%	\bibitem{Zh11} T. Zhang, ``Existence of real eigenvalues of real tensors'', {\sl Nonlinear Analysis \bf 74} (2011) 2862-2868.


%	\bibitem{ZSK23} Y. Zhang, W. Shen, and D. Kong, ``Covariance estimation for matrix-valued data'', {\sl Journal of the American Statistical Association \bf 118 (544)} (2023) 2620-2631.



%		\bibitem{ZSMWZH16}	 Z. Zhang, Q. Shi, J. McAuley, W. Wei, Y. Zhang and A. van den Hengel, ``Pairwise matching through max-weight bipartite belief propagation'',   IEEE Conference on Computer Vision and Pattern Recognition (CVPR) (2016)  1202-1210.

\bibitem{Zh23} J. Zhao, ``Conditions of strong ellipticity and calculation of M-eigenvalues for a partially symmetric tensor'', {\sl Applied Mathematics and Applications \bf 458} (2023) No. 128245.




\bibitem{ZLS24}J. Zhao, P. Liu and C. Sang, ``Shifted inverse power method for computing the smallest M-eigenvalue of a fourth-order partially symmetric tensor'', {\sl Journal of Optimization Theory and Applications \bf 200} (2024) 1131-1159.


%\bibitem{ZHHZ10} W. Zou, Q. He, M. Huang, Q. Zheng, ``Eshelby's problem of non-elliptical inclusions'', {\sl Journal of the Mechanics and Physics of Solids \bf 58} (2010) 346-372.

\end{thebibliography}
\end{document}